\theoremstyle{plain}% name and number in boldface and body in italic
\newtheorem{thm}{Theorem}[section]
\newtheorem{cor}[thm]{Corollary}
\newtheorem{lem}[thm]{Lemma}
\newtheorem{exm}{Example}
\newtheorem{prop}[thm]{Proposition}
\newtheorem{defn}[thm]{Definition}
\newtheorem{rem}[thm]{Remark}
\newcommand{\smatrix}[1]{\left(\begin{smallmatrix}#1\end{smallmatrix}\right)}
\def\Inj{\operatorname{Inj}}
\def\inj{\operatorname{inj}}
\def\mod{\operatorname{mod}}
\def\Mod{\operatorname{Mod}}
\def\Hom{\operatorname{Hom}}
\def\End{\operatorname{End}}
\def\supp{\operatorname{supp}}
\def\bdim{\operatorname{\bold{dim}}}
\def\uMod{\operatorname{\underline{Mod}}\nolimits}
\def\umod{\operatorname{\underline{mod}}\nolimits}
\def\mod{\operatorname{mod}}
\def\Mod{\operatorname{Mod}}
\def\Hom{\operatorname{Hom}}
\def\End{\operatorname{End}}
\def\proj{\operatorname{proj}}
\def\inj{\operatorname{inj}}
\def\Inj{\operatorname{Inj}}
\def\mabb#1{\mathbb{#1}}
\def\mac#1{\mathcal{#1}}
\def\db #1{D^b(\mod #1)}
\def\a{\alpha}
\def\b{\beta}
\def\rta{\rightarrow}
\def\a{\alpha}
\def\b{\beta}
\def\sg{\sigma}
\begin{document}

\title{Generically trivial derived categories}

\author{Zhe Han}
\address{Zhe Han \\School of Mathematics and Information Science\\
Henan University \\Kaifeng 475001\\ China.}
\address{Fakult\"at f\"ur Mathematik\\
Universit\"at Bielefeld \\D-33501 Bielefeld\\ Germany.}
\email{hanzhe0302@gmail.com}

\begin{abstract}
We study generic objects in triangulated categories and give a characterization of the finite dimensional algebras $A$ such that the  derived categories  $ D(\Mod A)$ are generically trivial. This characterization is an analogue of a result of Crawley-Boevey for module categories.
 As a consequence,
we show that $D(\Mod A)$ is generically trivial if and only if the category of perfect complexes $K^b(\proj A)$ is locally finite.

\end{abstract}
\maketitle
\setcounter{tocdepth}{1}
%\tableofcontents

\section{Introduction}
 For a finite
dimensional algebra $A$ over an algebraically closed field $k$, indecomposable pure injective modules which are not finitely generated are very important. There is a criterion to determine the representation type of a given algebra by
these pure injective modules.
 An artin algebra  $A$ is representation infinite if and only if
 there is an indecomposable pure injective $A$-module which is not finitely generated \cite{he97,cb92}. If it is of finite length over its endomorphism ring, then it is called generic \cite{cb92}.
 An algebra $A$ is called generically trivial if it has no generic modules.

There are similar notions in triangulated categories. Given a compactly generated triangulated category $\mac T$, an object $N$ of $\mac T$ is endofinite if the $\End N$-module $\Hom_{\mac T}(C,N)$ is of finite length for every $C\in\mac T^c$, where $\mac T^c$ is the subcategory of all compact objects in $\mac T $. Endofinite objects have nice decomposition properties \cite{kra99}.
An object $G$ in $\mac T$ is called a generic object if it is indecomposable, endofinite and not a compact object.  The triangulated category $\mac T$ is called generically trivial if it has no generic objects.

It is natural to ask how generic objects affect the behavior of $\mac T$. We consider the case $\mac T=  D(\Mod A)$, the unbounded derived category of all $A$-modules. For this triangulated category, we have $D(\Mod A)^c\cong K^b(\proj A)$, the category of perfect complexes. Is there any relation between the generic objects in $ D(\Mod A)$ and
 the infinite continuous families of  indecomposable objects in $K^b(\proj A)$?
We shall try to answer these questions. We will give a characterization of the algebras with generically trivial derived categories.

Our main result is the following:
\begin{thm}[Theorem 3.11]
  Let A be a finite dimensional algebra. Then $  D(\Mod A)$ is generically trivial if and only if $A$ is derived equivalent to a hereditary algebra of Dynkin type.
\end{thm}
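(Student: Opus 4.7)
\noindent\emph{Proof plan.}
The proof will be in two halves, linking generic triviality of $D(\Mod A)$ to local finiteness of $K^b(\proj A)$ and then to the Dynkin hereditary classification.

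For the ``if'' direction, suppose $A$ is derived equivalent to a hereditary algebra $H$ of Dynkin type. A derived equivalence $D(\Mod A)\simeq D(\Mod H)$ preserves endofiniteness, indecomposability and compactness, so it suffices to show $D(\Mod H)$ has no generic object. Because $H$ is hereditary, every complex in $D(\Mod H)$ is isomorphic to the direct sum of shifts of its cohomology modules; in particular every indecomposable object is of the form $M[n]$ with $M$ an indecomposable $H$-module. A generic such $M[n]$ would give a generic $H$-module $M$, but since Dynkin hereditary algebras are representation-finite, Crawley-Boevey's theorem forbids this.

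For the ``only if'' direction, assume $D(\Mod A)$ is generically trivial. I plan to proceed in two steps. First, I would argue by contraposition that $K^b(\proj A)$ is locally finite. If it were not, then there would exist an infinite family of pairwise non-isomorphic indecomposable perfect complexes connected by uniformly bounded morphism data. Imitating Crawley-Boevey's functor-categorical construction of a generic module from such a family, and using the restricted Yoneda embedding $D(\Mod A)\hookrightarrow \Mod K^b(\proj A)$ together with Krause's correspondence between endofinite objects on either side, one would extract an endofinite, indecomposable, non-compact object of $D(\Mod A)$, contradicting generic triviality.

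Second, once $K^b(\proj A)$ is locally finite, the Xiao-Zhu classification applies: the AR-quiver of $K^b(\proj A)$ is of the form $\mathbb{Z}\Delta/G$ for a Dynkin diagram $\Delta$ and a weakly admissible group $G$. The location of the indecomposable projective summands of $A$ inside this quiver, combined with the fact that $[1]$ acts as a triangulated autoequivalence of infinite order, forces $G$ to be trivial. The shape $\mathbb{Z}\Delta$ then matches the AR-quiver of $D^b(\mod H)$ for $H$ a hereditary algebra of type $\Delta$, and a standard tilting reconstruction yields the derived equivalence $A\sim H$. The main obstacle will be the first step of the converse: manufacturing a generic object of $D(\Mod A)$ out of the failure of local finiteness of $K^b(\proj A)$. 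This is the triangulated analogue of Crawley-Boevey's deepest assertion; its technical heart is ensuring that the limit object has finite endolength and genuinely lies in $D(\Mod A)$, rather than only in an ambient functor or ind-category.
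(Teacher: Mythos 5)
Your ``if'' direction is fine and is essentially the standard argument (the paper omits it, citing only the ``only if'' part as needing proof). The problem is the first step of your converse. You propose to show that failure of local finiteness of $K^b(\proj A)$ yields ``an infinite family of pairwise non-isomorphic indecomposable perfect complexes connected by uniformly bounded morphism data'' to which a Crawley-Boevey--style construction can be applied. This is exactly where the argument breaks down, and the paper's own introduction flags why: \emph{derived discrete} algebras that are not of Dynkin type (by Vossieck's classification, the gentle one-cycle algebras violating the clock condition) have, by definition, only finitely many indecomposable perfect complexes in each homological dimension vector, so no continuous family with uniformly bounded data exists --- yet their derived categories are not generically trivial and $K^b(\proj A)$ is not locally finite. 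So the intermediate claim you need is false in precisely the hardest case, and the ``technical heart'' you defer (manufacturing an endofinite limit object of finite endolength inside $D(\Mod A)$) cannot be supplied along the route you sketch.

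The paper instead argues by a trichotomy. If $\operatorname{gl.dim} A=\infty$, any indecomposable object of $D^b(\mod A)$ outside $K^b(\proj A)$ is already generic (Hom-finiteness gives endofiniteness). If $A$ has finite global dimension and is not derived discrete, Proposition 3.8 produces a generic object by transporting the problem through Happel's functor $F\colon D(\Mod A)\simeq \uMod\hat{A}$ to a finite truncation $\hat{A}_{m,n}$ of the repetitive algebra, where Crawley-Boevey's module-theoretic theorem applies; this is where your CB-style construction legitimately lives, but only after the support-finiteness reduction. Finally, for derived discrete non-Dynkin algebras the paper constructs a generic object \emph{directly}: it exhibits an infinite string on the quiver $(\hat{Q},\hat{\rho})$ of the repetitive algebra (Lemma 3.9), whose string module is a locally finite, indecomposable, endofinite, non-compact object of $\uMod\hat{A}\simeq D(\Mod A)$ (Lemma 3.10). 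This explicit construction is the missing idea in your plan; without it (or some substitute for the derived discrete case) your contraposition does not go through. Your second step (local finiteness plus the Xiao--Zhu classification forces $\mathbb{Z}\Delta$ and a derived equivalence with $k\Delta$) is sound, but in the paper it appears only as a corollary deduced \emph{after} the main theorem, not as an ingredient of its proof.
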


The result shows that the generically trivial derived categories are rare. Even there is no  infinite continuous family of  indecomposable objects in $K^b(\proj A)$, it is possible that $D(\Mod A)$ has generic objects.

The paper is organized as follows. In section 2 we introduce the background of finite dimensional algebras and triangulated categories. In section 3 we collect some results of endofinite objects in triangulated
category and  give a proof of our main result. In section 4 we give some consequences of the main result and some examples.

\section{Preliminaries}

\subsection{Gentle algebras and repetitive algebras}
Throughout the paper, we fix that the field $k$ is algebraically closed and $A=kQ/I$ is a connected algebra, i.e $Q$ is a connected quiver.
We recall that a quiver is a quadruple $Q=(Q_0,Q_1,s,t)$, where $Q_0$ is the set of
vertices, $Q_1$ is the set of arrows, and $s,t$ are two maps
$Q_1\rightarrow Q_0$ indicating the source and target vertices of an
arrow. A quiver $Q$ is \emph{finite} if $Q_0$ and $Q_1$ are finite sets.

A \emph{relation} $\sg$ on the quiver $Q$ is a $k$ -linear combination of paths $\sg=\sum x_i\a_i$ with paths $\{\a_i\}$ having the same source and target. Let $R_Q$ be the ideal of $kQ$ generated by all
the arrows in $Q$.  A set of relations $\{\rho_i\}$ is called admissible if the ideal $(\rho_i)$ generated by them satisfying that $R_Q^m\subset (\rho_i)\subset R_Q^2$ for some integer $m\geq 2$.
The pair $(Q,{\rho_i})$ is called a quiver with relations $\rho=(\rho_i)$, or a \emph{bounded quiver}. We associate an algebra $k(Q,\rho)=kQ/(\rho)$ to the pair $(Q,\rho)$. The algebra $k(Q,\rho)$ is an
 associative $k$-algebra.

Let $Q$ be a (not necessarily finite) quiver, $\rho$ be a set of relations for $Q$. The bounded quiver $(Q,\rho)$ is called  \emph{special biserial} if it satisfies
\begin{enumerate}
 \item[(1)]  For any vertex in $Q$, there are at most two arrows starting at and ending in this vertex.
\item[(2)] Given an arrow $\b$, there is at most one arrow $\a$ with $t(\a)=s(\b)$ and $\b\a\notin \rho$, and there is at most one arrow $\gamma$ with $t(\b)=s(\gamma)$ and $\gamma\b\notin \rho$.
\item[(3)] Each infinite path in $Q$ contains a subpath which is in $\rho$.
\end{enumerate}

A bounded quiver $(Q,\rho)$ is \emph{gentle} if it is special biserial and the following additional conditions hold,
\begin{enumerate}
 \item[(4)] All elements in $\rho$ are paths of length 2;
\item[(5)]  Given an arrow $\b$, there is at most one arrow $\a'$ with $t(\a')=s(\b)$ and $\b\a\in \rho$, and there is at most one arrow $\gamma'$ with $t(\b)=s(\gamma')$ and $\gamma'\b\in \rho$.
\end{enumerate}

A $k$-algebra $A\cong kQ/(\rho)$ is called \emph{special bieserial} or \emph{gentle} if $(Q,\rho)$ is special biserial or gentle, respectively. A $k$-algebra $A\cong kQ/(\rho)$ is called \emph{string algebra} if
   it is special biserial and the relation is generated by zero relations.

Let $\b\in Q_1$, we denote the formal inverse of $\b$ by $\b^{-1}$ with $s(\b^{-1})=t(\b)$ and $t(\b^{-1})=s(\b)$. A \emph{word} $w$ is a sequence $w_1w_2\ldots w_n$ where $w_i\in Q_1\cup Q_1^{-1}$
and
$s(w_i)=t(w_{i+1})$ for $1\leq i\leq n$. The inverse of a word $w=w_1w_2\ldots w_n$ is $w^{-1}=w_n^{-1}\ldots w_2^{-1}w_1^{-1}$.  A \emph{string} $w$ of length $n>0$ is a word $w$ of length $n$ satisfying that $w_{i+1}\neq w_i^{-1}$ for
$1\leq i\leq n_1$ and $w\notin (\rho)$. For any string $w$, the associated string module $M(w)$ is an indecomposable module \cite{br87}.

Let $A$ be a finite dimensional basic $k$-algebra. Denote by $D=\Hom_k(-,k)$  the
standard duality on $\mod A$. $Q=DA$ is a $A$-$A$-module via $a',a''\in
A, \varphi\in Q,(a'\varphi a'')(a)=\varphi(a'aa'')$.

\begin{defn}The repetitive algebra $\hat{A}$ of $A$ is defined as following,
the underlying vector space is given by
\[\hat{A}=(\oplus_{i\in\mathbb{Z}}A)\oplus(\oplus_{i\in\mathbb{Z}}Q)\]

denote the elements of $\hat{A}$ by $(a_i,\varphi_i)_i$, almost all
$a_i,\varphi_i$ being zero. The multiplication is defined by
\[(a_i,\varphi_i)_i\cdot (b_i,\phi_i)_i=(a_ib_i,a_{i+1}\phi_i+\varphi b_i)_i\]
\end{defn}

An $\hat{A}$-module is given by $M=(M_i,f'_i)_{i\in \mathbb{Z}}$,
where $M_i$ are A-modules and $f'_i:Q\otimes_AM_i\rightarrow M_{i+1}$
such that $f'_{i+1}\circ (1\otimes f'_i)=0$.

Given $\hat{A}$-modules $M=(M_i,f'_i)$ and $N=(N_i,g'_i)$, the
morphism $h:M\rightarrow N$ is a sequence $h=(h_i)_{i\in
\mathbb{Z}}$ such that
\begin{displaymath}
\xymatrix{Q\otimes_A M_i\ar[r]^{f'_i}\ar[d]^{1\otimes h_i}&M_{i+1}\ar[d]^{h_{i+1}}\\
 Q\otimes N_i\ar[r]^{g'_i}&N_{i+1}}
 \end{displaymath}commutes.
There is a dual of the above description \cite{ha88}.

Let $\Mod \hat{A}$ be the category of all $\hat{A}$-modules, and $\mod \hat{A}$ be the subcategory of finite dimensional modules. They both are Frobenius categories \cite{ha88}.
Thus the associated stable categories
$\uMod \hat{A}$ and $\umod \hat{A}$ are triangulated categories. Moreover, $\umod\hat{A}$ is the full subcategory of compact objects in $\uMod\hat{A}$.
Let $  K(\Inj A)$ be the homotopy category of injective $A$-modules. It is  compactly generated triangulated category. Its subcategory $  K(\Inj A)^c$ of all compact objects is triangle equivalent to
the bounded derived category $\db A$ of finitely generated $A$-modules. There are closed relations between $  K(\Inj A)$ and $\uMod \hat{A}$, as well as $\db A$ and $\umod \hat{A}$.
Happel introduced the embedding functor $\db A\rta \umod \hat{A}$ \cite{ha87}, and the functor was extended to $  K(\Inj A)\rta \uMod \hat{A}$ of unbounded complexes in \cite{kl06}.
\begin{rem}
 As in \cite{gk02}, every $\hat{A}$-module $X$ has a maximal injective submodule $X^{\inj}$ since direct limits of injective modules are injective and Zorn's lemma. Let $X^{red}=X/X^{\inj}$, two $\hat{A}$-modules
$X$ and $Y$ are isomorphic as objects in $\uMod \hat{A}$ if and only if $X^{red}$ and $Y^{red}$ are isomorphic in $\Mod \hat{A}$.
\end{rem}

 The embedding functor is very useful because the module category $\mod \hat{A}$ is well-understood, even $\Mod \hat{A}$ is well-understood.
 By the embedding functors, $\db A$ can be controlled by $\umod \hat{A}$ and
$  K(\Inj A)$ can be controlled by $\uMod \hat{A}$.

 The following proposition shows that there exists a fully faithful functor between $  K(\Inj A)$ and $\uMod \hat{A}$ extending Happel's functor.

\begin{prop} \cite[Theorem 7.2]{kl06}\label{inj-ebding}
There is a fully faithful triangle functor $F$ which is the composition of
\[\xymatrix{  K(\Inj A)\ar[r]&  K_{ac}(\Inj\hat{A})\ar[r]^{\sim}&\uMod\hat{A}}\]
and extending Happel's functor
\[\xymatrix{D^b(\mod A)\ar[r]^{-\otimes_AA_{\hat {A}}}&D^b(\mod\hat{A})\ar[r]&\umod\hat{A}}.\]
The functor $F$ admits a right adjoint \[G:\uMod\hat{A}\rta K(\Inj A).\]
\end{prop}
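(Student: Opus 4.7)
The plan is to reduce the statement to two separate ingredients: a Frobenius-category equivalence on one side, and an enhanced version of Happel's embedding on the other, then glue them together.

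First I would establish the equivalence $K_{ac}(\Inj\hat{A})\xrightarrow{\sim}\uMod\hat{A}$. Since $\hat{A}$ is a (locally bounded) self-injective algebra, $\Mod\hat{A}$ is a Frobenius category in which the classes of injectives and projectives coincide, so every module has a totally acyclic (complete) resolution by injectives, unique up to homotopy. The functor sending an acyclic complex $I^{\bullet}$ of injective $\hat{A}$-modules to the zeroth cocycle $Z^{0}I^{\bullet}$ descends to a triangle equivalence $K_{ac}(\Inj\hat{A})\simeq\uMod\hat{A}$; its quasi-inverse assigns to a module its complete injective resolution. This is a routine adaptation of the classical result for self-injective algebras to the unbounded, non-noetherian setting, using that $\Mod\hat{A}$ is closed under arbitrary coproducts of injectives.

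Next I would construct the functor $K(\Inj A)\rta K_{ac}(\Inj\hat{A})$. There is a natural exact functor $-\otimes_{A}A_{\hat{A}}:\Mod A\rta\Mod\hat{A}$ (Happel's identification of an $A$-module with the degree-$0$ part of an $\hat{A}$-module). I would lift it to complexes and then compose with a totally acyclic completion, i.e.\ given a complex of injective $A$-modules, tensor up to $\hat{A}$ and splice the result with its complete injective resolution in degrees coming from the copies of $A$ different from the distinguished one, producing an acyclic complex of injective $\hat{A}$-modules. The composition with the equivalence from the first step then agrees, on $D^{b}(\mod A)\simeq K(\Inj A)^{c}$, with Happel's classical embedding $D^{b}(\mod A)\rta\umod\hat{A}$; this is the extension diagram displayed in the statement.

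For fully faithfulness, I would argue that $F$ preserves coproducts (evident from the pointwise construction) and check the Hom-isomorphism on a compact generator $X\in K(\Inj A)^{c}\simeq D^{b}(\mod A)$: for such $X$ and any $Y\in K(\Inj A)$, both $\Hom_{K(\Inj A)}(X,Y)$ and $\Hom_{\uMod\hat{A}}(FX,FY)$ commute with coproducts in $Y$ and agree when $Y$ is compact by Happel's theorem. Since $K(\Inj A)$ is compactly generated, this upgrades to fully faithfulness on all of $K(\Inj A)$. Finally, because $F$ is a coproduct-preserving triangle functor out of a compactly generated triangulated category and lands in a well-generated one, Brown representability (Neeman) produces the right adjoint $G:\uMod\hat{A}\rta K(\Inj A)$ automatically, with no further construction needed. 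The subtle step I expect to be the main obstacle is verifying that the acyclic completion can be performed functorially and coproduct-preservingly on \emph{unbounded} complexes of injectives, as opposed to the bounded situation of Happel's original work; this is precisely where the machinery of \cite{kl06} (the isomorphism $K(\Inj A)\simeq K_{ac}(\Inj\hat{A})^{\perp^{-}}$-style arguments) does the real work.
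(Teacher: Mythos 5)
The paper offers no proof of this statement to compare against: Proposition~\ref{inj-ebding} is imported verbatim from \cite[Theorem 7.2]{kl06} and used as a black box. Judged on its own terms, your outline gets the two ends of the argument right. The equivalence $Z^0\colon K_{ac}(\Inj\hat{A})\to\uMod\hat{A}$ is indeed the classical Frobenius-category statement, and the hypothesis you need (coproducts of injective $\hat{A}$-modules are injective, which holds because $\Mod\hat{A}$ is locally noetherian) is correctly identified. Likewise, once one has a coproduct-preserving triangle functor out of the compactly generated category $K(\Inj A)$, Brown representability does produce the right adjoint $G$ with no further work. The fully-faithfulness dévissage is essentially correct but should be stated as a two-variable localizing-subcategory argument: first fix $X$ compact and vary $Y$ (using that both $X$ and $FX$ are compact, the latter because Happel's functor lands in $\umod\hat{A}$, and that $F$ preserves coproducts), then fix $Y$ and vary $X$; "both sides commute with coproducts in $Y$" alone does not reach arbitrary $Y$, since not every object is a coproduct of compacts.

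The genuine gap is the middle functor $K(\Inj A)\to K_{ac}(\Inj\hat{A})$, which is where the entire content of the theorem lives, and the construction you sketch would fail as written. The functor $-\otimes_A A_{\hat{A}}$ is extension by zero along the projection $\hat{A}\to A$; it does not send injective $A$-modules to injective $\hat{A}$-modules (the indecomposable injectives $D(e_x\hat{A})$ are supported on two adjacent copies of $A$ in the repetitive grading, so an injective $A$-module concentrated in one copy is not $\hat{A}$-injective). Consequently "tensor up and splice with a complete injective resolution" is not a termwise operation on complexes of injectives; termwise splicing also has no meaning for a complex unbounded in both directions; and the phrase "degrees coming from the copies of $A$ different from the distinguished one" conflates the cohomological grading of the complex with the $\mathbb{Z}$-grading of $\hat{A}$, which must interact nontrivially (as they already do in Happel's original construction). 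The construction has to go through the recollement $K_{ac}(\Inj\hat{A})\rightleftarrows K(\Inj\hat{A})\rightleftarrows D(\Mod\hat{A})$ and its adjoints, or an explicit identification of complexes of injective $\hat{A}$-modules with suitable graded data, and verifying that the resulting composite preserves coproducts and restricts to Happel's functor on compacts is exactly the work of \cite{kl06}. You concede this in your final sentence, which is honest, but it means the proposal is a plausible outline of how the cited proof is organized rather than a proof.
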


For any algebra $A$, there is some useful results about the image of Happel's functor as following \cite[Lemma 3.4,3.5]{gk02}.
\begin{prop}\label{func-1}
\begin{enumerate}
 \item[(1)]
Let $X\in D^b(\Mod A)$, with $H^i(X)=0$ for $|i|>n$, then $F(X)_i=0$ for $|i|>2(n+1)$.
\item[(2)] Assume $A$ has finite global dimension d. Let $X\in  D^b(\Mod A)$ with $F(X)_i=0$ for $|i|>n$. Then $H^i(X)=0$ for $|i|>(n+1)(d+1)$.
\end{enumerate}
\end{prop}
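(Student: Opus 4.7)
The plan is to prove both parts by tracing through the definition of the functor $F$ from Proposition~\ref{inj-ebding}. Recall that $F$ is built by first replacing $X$ by a quasi-isomorphic complex of injective $A$-modules, then re-interpreting this complex as an acyclic complex of injective $\hat A$-modules (using the degree-$0$ embedding $\Inj A \hookrightarrow \Inj\hat A$ together with syzygies supplied by (co)resolutions inside $\hat A$), and finally passing to $\uMod\hat A$ via the equivalence $K_{ac}(\Inj \hat A) \simeq \uMod \hat A$.

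For part~(1), I would first apply canonical truncation to replace $X$ by a quasi-isomorphic complex of $A$-modules concentrated in cohomological degrees $[-n,n]$. Taking an injective envelope term by term produces a representative $I^{\bullet} \in K(\Inj A)$ whose components sit in a controlled range. When $I^{\bullet}$ is reinterpreted as an acyclic complex of injective $\hat A$-modules, each term $I^i$ contributes to the $\mathbb Z$-grading of $\hat A$ in at most two consecutive slots (one coming from the $A$-summand and one from the $Q = DA$-summand of $\hat A$), which accounts for the factor of $2$ in the bound $2(n+1)$. The extra $+1$ reflects the one additional resolution step needed to close off the truncated complex into a totally acyclic complex of $\hat A$-injectives.

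For part~(2), I would use the adjoint $G \colon \uMod\hat A \to K(\Inj A)$ of Proposition~\ref{inj-ebding} together with the fact that $F$ is fully faithful, so that $GF(X) \simeq X$. Since $F(X)_i = 0$ for $|i|>n$, the associated acyclic complex of injective $\hat A$-modules has only finitely many nonzero pieces. Applying $G$ essentially restricts back to the $A$-summand in each grading; finite global dimension $d$ now bounds the length of any injective resolution over $A$ by $d$, so each of the at most $2n+1$ nonvanishing slots of $F(X)$ can be expanded into at most $d+1$ cohomological degrees of a representative of $X$. This yields $H^i(X) = 0$ for $|i| > (n+1)(d+1)$.

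The principal technical obstacle in both parts is managing the passage between (possibly unbounded) injective resolutions over $A$ and the $\mathbb Z$-grading of $\hat A$. In part~(1) the injective resolution of the truncated complex may extend indefinitely, since $A$ is not assumed to have finite global dimension; one must explain why only the initial segment of length controlled by $n$ contributes to the support of $F(X)$ in $\uMod\hat A$, the rest being absorbed into the projective-injective part that vanishes in the stable category. In part~(2) the challenge is to identify explicitly how $G$ recovers the cohomology of $X$ from the graded pieces of $F(X)$, and to verify that the hypothesis $\operatorname{gl.dim} A = d$ is exactly what rules out cohomology escaping the interval $[-(n+1)(d+1),(n+1)(d+1)]$.
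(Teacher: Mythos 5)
The paper does not actually prove this proposition: it is quoted from Geiss and Krause (the cited Lemmas 3.4 and 3.5 of \cite{gk02}), so there is no internal argument to compare yours against, and your attempt has to stand on its own. As it stands it is an outline whose two essential steps are exactly the ones left open. In part (1) you correctly flag that the injective resolution of the truncated complex is unbounded when $\operatorname{gl.dim}A=\infty$, and you assert that the tail is ``absorbed into the projective--injective part that vanishes in the stable category''; but that assertion \emph{is} the statement to be proved, and it is not automatic --- one must show that the acyclic complex of injective $\hat{A}$-modules attached to $I^{\bullet}$ has a cycle object whose reduced part is supported in finitely many slots of the $\mathbb Z$-grading. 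Your bookkeeping also does not produce the stated bound: if each term of $I^{\bullet}$ merely contributed to two adjacent slots (one from the $A$-part and one from the $DA$-part of $\hat{A}$), the support of $F(X)$ would have width $2$ independently of $n$. The linear growth $2(n+1)$ comes instead from the iterated (co)syzygies over $\hat{A}$ needed to collapse a complex with roughly $2n+1$ nonzero terms into a single object of $\uMod\hat{A}$, each such step enlarging the support by one slot; nothing in your sketch tracks this.

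In part (2) the phrase ``applying $G$ essentially restricts back to the $A$-summand in each grading'' is not a computation: $G$ is only given abstractly as a right adjoint, and your sketch never identifies $H^i(X)=\Hom_{D(\Mod A)}(A,X[i])$ with data read off from the slots $F(X)_j$. A workable route is to use full faithfulness to write $H^i(X)\cong\underline{\Hom}_{\hat{A}}(F(A),\Omega^{-i}F(X))$ (up to the usual low-degree corrections) and then to control the supports of the cosyzygies $\Omega^{\pm i}F(X)$, using $\operatorname{gl.dim}A=d$ to bound how many such steps can keep the support meeting slot $0$; some explicit estimate of this kind is needed before the quantity $(n+1)(d+1)$ can be extracted. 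In short, the architecture you describe (truncation, the factorization of $F$ through $K_{ac}(\Inj\hat{A})$, the adjoint $G$) is the right setting, but neither numerical bound is actually derived, and the central support-control arguments are named rather than carried out.
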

\subsection{Derived discrete algebras}
For an algebra $A$, let $\db A$ be the bounded derived category of $\mod A$. For a complex $X\in\db A$,
we define the \emph{homological dimension} of $X$ to be the vector
 $h\bdim (X)=(\bdim H^i(X))_{i\in \mabb Z}$.

Algebras with the property that there are only finitely many indecomposable objects in the derived category with the same homological dimension were studied in \cite{v01}. We call them as \emph{derived discrete algebras}, following \cite{ba06}.
The following is a characterization of derived discrete algebras.

\begin{prop}\cite{v01}
Let $A$ be a connected finite dimensional $k$-algebra. Then $A$ is derived discrete if and only if $A$ is either derived hereditary of Dynkin type or is Morita equivalent to the path algebra of a gentle bounded quiver
 with exactly one cycle with different  numbers of clockwise and counterclockwise oriented relations.
\end{prop}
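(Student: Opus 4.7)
The plan is to prove the two directions separately. For the \emph{if} direction, I would handle the two cases in turn. If $A$ is derived equivalent to a hereditary algebra $kQ$ of Dynkin type, then $\db A \cong \db{kQ}$, and because $kQ$ is hereditary every indecomposable object of the derived category is isomorphic to a shift of an indecomposable $kQ$-module; since there are only finitely many indecomposable $kQ$-modules and $h\bdim$ determines the shift, only finitely many indecomposables can share a given homological dimension vector. For the gentle one-cycle case I would invoke the Bekkert--Merklen description of indecomposables in the derived category of a gentle algebra in terms of string complexes and band complexes. The hypothesis that the number of clockwise and counterclockwise oriented relations on the unique cycle differ is precisely the combinatorial obstruction to the existence of band complexes, so only string complexes remain, and a direct count by the length and the support of the underlying walk bounds the number of string complexes with any fixed $h\bdim$.

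For the \emph{only if} direction I would argue contrapositively: assuming $A$ is neither of the two listed forms, I must exhibit a homological dimension vector realized by infinitely many indecomposables. The first step is to transport the question through Happel's embedding $\db A \hookrightarrow \umod\hat{A}$ of Proposition \ref{inj-ebding}, translating derived discreteness into a discreteness condition on bounded-degree objects of the Frobenius category $\mod\hat{A}$, using Proposition \ref{func-1} to control the degree shift between $h\bdim$ and the support in $\hat{A}$. The second step is to analyze the Auslander--Reiten quiver of $\umod\hat{A}$: any tube or $\mathbb{Z}A_\infty^\infty$ component hitting bounded homological degree at infinitely many points of the same dimension vector yields the required obstruction. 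The third step is to reduce to special biserial $A$ via the standard dichotomy (otherwise a wild subcategory or a continuous family of modules already appears in $\mod A$), and then to analyze which admissible presentations $kQ/I$ produce a repetitive algebra whose $\mod\hat{A}$ satisfies the discreteness condition; the gentle one-cycle form with the parity condition on oriented relations emerges as the unique combinatorial solution.

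The main obstacle is the \emph{only if} direction, specifically controlling derived discreteness from below by module-theoretic data on $\hat{A}$. The difficulty is that derived discreteness is a condition on all of $\db A$ and not just on $\mod A$, so arguments using Brauer--Thrall or the representation-type dichotomy for module categories have to be lifted along the Happel embedding and carefully tuned so that the bound on $|i|$ in Proposition \ref{func-1}(2), which depends on the global dimension $d$, does not destroy the dimension count. I expect the crucial technical step to be producing, for each candidate algebra outside the classification, either an explicit tube in the AR quiver of $\umod\hat{A}$ or a band module over $\hat{A}$ whose image under $F$ realises a one-parameter family of indecomposables in $\db A$ sharing the same $h\bdim$; the parity condition on the oriented relations of the unique cycle then appears precisely as the combinatorial invariant that forbids bands on $\hat{A}$.
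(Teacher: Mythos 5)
The paper offers no proof of this proposition: it is quoted verbatim from Vossieck \cite{v01}, so there is nothing in the text to compare your argument against, and your proposal has to stand on its own. Your \emph{if} direction is a reasonable outline of one known route: the Dynkin case is correct as stated, and for a gentle one-cycle algebra the description of the indecomposables of $\db A$ by homotopy strings and bands, together with the facts that failure of the clock condition excludes generalized bands and that long homotopy strings must then wind through relations and hence spread out in cohomological degree, does bound the number of indecomposables with a fixed $h\bdim(X)$. Both of those facts are nontrivial and would have to be proved or cited, but the strategy is sound.

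The \emph{only if} direction has a genuine gap at your third step. Applying derived discreteness to stalk complexes only tells you that $\mod A$ has no infinite family of indecomposables with a fixed dimension vector, hence (by the second Brauer--Thrall theorem over an algebraically closed field) that $A$ is representation finite. Representation finiteness does not bring you anywhere near special biserial: the path algebras of $D_n$ and $E_n$ and many non-biserial tilted algebras are representation finite, so there is no ``standard dichotomy'' producing a continuous family of modules from the mere failure of special biserialness, and the reduction collapses. The concluding sentence --- that the gentle one-cycle form with the parity condition ``emerges as the unique combinatorial solution'' --- is precisely the content of Vossieck's theorem rather than a step in its proof; the actual argument has to work inside $\umod\hat{A}$, transporting a one-parameter family of finite-dimensional $\hat{A}$-modules of bounded support back to a family in $\db A$ with common $h\bdim$ (this is where Proposition \ref{func-1} enters, as you say) and then carrying out a detailed exclusion of forbidden subquivers with relations to force the gentle one-cycle shape. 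A further imprecision in your second step: a single tube or $\mathbb{Z}A_\infty^\infty$ component of the Auslander--Reiten quiver does not by itself yield infinitely many indecomposables with the same dimension vector, since dimension vectors grow along a tube; you need a genuine $\mathbb{P}^1$-family of bands, which is exactly what the combinatorial analysis is supposed to produce.
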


We call a cycle in a quiver satisfies \emph{clock condition} if the numbers of clockwise and counterclockwise oriented relations on the cycle are same.

\section{Generically trivial derived categories}
\subsection{Generic objects of $  D(\Mod A)$}
The endolength of a module was introduced by Crawley-Boevey \cite{cb92}. Modules of finite endolength (endofinite modules) can be used to characterize the representation type of an algebra. We will introduce the generically
trivial triangulated categories and show that endofinite objects play an important role in the structure of triangulated categories.

We mainly concern endofinite objects in the special triangulated categories $  D(\Mod A)$, for some finite dimensional $k$-algebra $A$ .

\begin{defn}
 Let $\mac T$ be a compactly generated triangulated category. An object $N\in \mac T$ is endofinite if the $\End_{\mac T} N$-module $\Hom(C,N)$ has finite length for any $C\in\mac T^c$.
\end{defn}

Endofinite objects of triangulated categories have very nice decomposition properties.
\begin{prop}
 Let $\mac T$ be a compactly generated triangulated category. An endofinite object $X\in \mac T$ has a decomposition $X=\coprod_iX_i$ into indecomposable objects with $\End X_i$ is local, and the decomposition is
unique up to isomorphism.
\end{prop}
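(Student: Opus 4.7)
The plan is to reduce the decomposition problem in $\mac T$ to a Matlis-type decomposition problem in an abelian functor category via the restricted Yoneda embedding, which is the standard strategy for pure-injective objects in compactly generated triangulated categories.

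First, I would pass to the Grothendieck category $\mac A = \Fun((\mac T^c)^{op}, \Ab)$ of additive contravariant functors from $\mac T^c$ to abelian groups, and consider the restricted Yoneda functor
\[
 h\colon \mac T \longrightarrow \mac A, \qquad X \longmapsto H_X := \Hom_{\mac T}(-,X)\big|_{\mac T^c}.
\]
The functor $h$ is homological, it preserves products and coproducts, and its image is contained in the flat objects of $\mac A$. A fundamental fact (Krause) is that $h$ restricts to an equivalence between the category of \emph{pure injective} objects of $\mac T$ and the category of injective objects of $\mac A$, and this equivalence preserves endomorphism rings.

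Next, I would show that an endofinite object $X$ is pure injective, and in fact $\Sigma$-pure injective (every set-indexed coproduct of copies of $X$ is again pure injective). For injectivity of $H_X$ in $\mac A$, one verifies Baer's criterion using that $H_X(C)$ has finite length over $\End X$ for each compact $C$, so ascending chains of subfunctors stabilize. The finite-length hypothesis also passes to arbitrary coproducts: $H_{X^{(I)}} \cong H_X^{(I)}$ as an $\End X$-module on each compact, and a coproduct of finite-length modules is a direct sum of indecomposables of bounded length, hence again has the ascending-chain property needed for injectivity. Thus $h$ sends $X$ and all its coproducts into the injective objects of $\mac A$.

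With $H_X$ an injective object in a Grothendieck category, Gabriel's structure theorem yields a decomposition $H_X = \bigoplus_i E_i$ into indecomposable injectives with local endomorphism rings, unique up to permutation (Azumaya/Krull--Schmidt for injectives in Grothendieck categories). Since $h$ preserves coproducts on pure injectives and is fully faithful there, this decomposition lifts back to a coproduct decomposition $X = \coprod_i X_i$ in $\mac T$ with $\End_{\mac T} X_i \cong \End_{\mac A} E_i$ local. Endofiniteness is inherited by summands (as $\Hom(C, X_i)$ is a direct summand of $\Hom(C, X)$ over $\End X$ via the split projection, and hence has finite length over $\End X_i$), and the uniqueness of the $(X_i)$ follows from the uniqueness of the $(E_i)$ together with the fully faithfulness of $h$ on pure injectives.

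The main obstacle I expect is the step establishing $\Sigma$-pure injectivity, i.e.\ checking that the finite-length condition on $\Hom(C,X)$ for each compact $C$ is strong enough to force $H_{X^{(I)}}$ to be injective for arbitrary $I$; the delicate point is that the $\End X$-module structure governs finite length, but one needs an ascending chain condition on subfunctors of $H_X$ in $\mac A$, which is indexed by all compact objects simultaneously. Once that is in place, the rest of the argument is a clean transport of the classical decomposition theory of injectives across the equivalence provided by $h$.
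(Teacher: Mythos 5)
Your overall architecture --- restricted Yoneda into $\mac A=\Fun((\mac T^c)^{\mathrm{op}},\Ab)$, injectivity of the image, Gabriel/Azumaya decomposition of injectives in a Grothendieck category, lift back along the equivalence between pure injectives of $\mac T$ and injectives of $\mac A$ --- is exactly the route behind the result, and the paper itself offers nothing beyond the citation to Krause's Proposition~1.2, so there is no competing argument in the text to compare against. The problem is the justification you give for the one step you yourself flag as delicate. You claim $H_X$ (and $H_X^{(I)}$) is injective because ``$H_X(C)$ has finite length over $\End X$, so ascending chains of subfunctors stabilize.'' An arbitrary subfunctor $F\subseteq H_X$, or a subobject $U\subseteq H_C$ of a representable as needed for Baer's criterion, assigns to each compact $C$ a subgroup $F(C)\subseteq\Hom(C,X)$ that is closed under precomposition with maps of compacts but need \emph{not} be an $\End X$-submodule, since functoriality says nothing about postcomposition with endomorphisms of $X$. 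So finite length over $\End X$ imposes no chain condition on the lattice of all subfunctors; $H_X$ is in general not a Noetherian object of $\mac A$ even for $X$ endofinite (and Noetherianness of $H_X$ is in any case not what Baer's criterion asks for). As written, neither the injectivity of $H_X$ nor the $\Sigma$-statement is established.

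The step is true and the repair is standard. The subgroups of $\Hom(C,X)$ of the form $\Im\bigl(\Hom(f,X)\colon\Hom(C',X)\rta\Hom(C,X)\bigr)$ for $f\colon C\rta C'$ in $\mac T^c$ \emph{are} $\End X$-submodules, so endofiniteness gives both chain conditions on this lattice of definable (matrix) subgroups; the descending chain condition is the Gruson--Jensen/Zimmermann criterion for $X$ to be $\Sigma$-pure-injective, and combined with the fact that $H_X$ is cohomological, hence flat, hence fp-injective in the locally coherent category $\mac A$, this yields injectivity of $H_X^{(I)}$ for every index set $I$. Alternatively, one can bypass injectives entirely: $H_X$ is an endofinite object of the module category $\mac A$ over the ring with several objects $\mac T^c$, and Crawley-Boevey's decomposition theorem for endofinite modules gives the coproduct decomposition into indecomposables with local endomorphism rings directly; this is in fact how the cited proof proceeds. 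With either repair, your transport of the decomposition back to $\mac T$ and the uniqueness argument go through as you describe.
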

 \begin{proof}
See \cite[Proposition 1.2]{kra99}
 \end{proof}
By this proposition, endofinite objects are completely determined by these indecomposable objects. Moreover, the full subcategory of endofinite objects is determined by $\mac T^c$.

By \cite[Theorem 1.2]{kra99}, we know that every endofinite object is pure injective in $\mac T$.
 There is a characterization of endofinite modules, which can be found in \cite{cb92}, originally due to Garavaglia.  An indecomposable module $M$ has finite endolength if and only if every product of copies of $M$ is
isomorphic to a direct sum of copies of $M$.  There is a similar result about endofinite objects in triangulated category \cite{kr01}.
\begin{prop}
 An indecomposable object $X$ is endofinite if and only if every product of copies of $X$ is a coproduct of copies of $X$.
\end{prop}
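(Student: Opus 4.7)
The plan is to adapt Garavaglia's classical module-theoretic proof (see \cite{cb92}) to the triangulated setting, leaning on Proposition 3.2 and on the fact (from \cite{kra99}) that endofinite objects are pure injective.

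For the forward direction, assume $X$ is indecomposable endofinite, and let $I$ be any set. The key intermediate step is that the product $X^I$ is itself endofinite: for any compact $C$, the natural isomorphism $\Hom(C,X^I)\cong\Hom(C,X)^I$ together with a composition series of $\Hom(C,X)$ over $\End(X)$ produces, after analysing the action of $\End(X^I)$ by matrix-like operations, a composition series of $\Hom(C,X^I)$ over $\End(X^I)$ of the same length. Proposition 3.2 then decomposes $X^I$ as a coproduct of indecomposables with local endomorphism rings. Since $X$ appears as a summand of $X^I$ via any coordinate projection and the decomposition is unique, every summand must be isomorphic to $X$.

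For the converse, assume every product $X^I$ is a coproduct of copies of $X$; in particular $X$ is pure injective as a summand of its own powers. Suppose for contradiction that for some compact $C$ the $\End(X)$-module $\Hom(C,X)$ has infinite length, and fix a strictly descending chain of $\End(X)$-submodules $U_0\supsetneq U_1\supsetneq\cdots$ together with witnesses $f_n\in U_n\setminus U_{n+1}$. The sequence $(f_n)$ assembles into a morphism $f: C\to X^{\mabb N}$. Using the hypothesis $X^{\mabb N}\cong\coprod_{j\in J} X$ and the compactness of $C$, we factor $f$ through a finite subcoproduct $X^{\oplus r}$, writing every $f_n$ as an $\End(X)$-combination of the same $r$ fixed morphisms in $\Hom(C,X)$. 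Combined with the pure injectivity of $X$ and the purity framework of \cite{kra99,kr01}, which identifies the relevant chain $U_n$ with a descent of pp-definable subgroups, this finite-generation of the $f_n$ forces the chain to eventually stabilize, a contradiction.

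The main technical obstacle lies in the converse direction, specifically in extracting the contradiction from the factorization through $X^{\oplus r}$: a finitely generated $\End(X)$-submodule need not have finite length, so one cannot directly conclude stabilization and must instead work with pp-definable submodules and the purity machinery of the functor category $\Fun(\mac T^c,\Ab)$ developed in \cite{kra99}. A secondary subtlety is the endofiniteness of $X^I$ in the forward direction, since $\End(X^I)$ properly contains any obvious matrix ring over $\End(X)$; this requires a careful direct analysis of the induced action on a composition series.
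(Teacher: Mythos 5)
The paper does not actually prove this proposition: it records it as a known triangulated analogue of Garavaglia's theorem and refers to \cite{kr01} (with the module case in \cite{cb92}), so your attempt to reconstruct the argument is being measured against the literature rather than against text in the paper. Your forward direction follows the standard route (show $X^I$ is endofinite, then apply the unique decomposition of Proposition 3.2), and that skeleton is right; but the last step is under-justified. Uniqueness of the decomposition only tells you that $X$ occurs \emph{among} the indecomposable summands of $X^I$; it does not by itself force \emph{every} summand to be isomorphic to $X$. To finish you need the additional input from \cite{kra99} that for an endofinite object the indecomposable summands of any product of copies of $X$ coincide with those of $X$ itself (equivalently, that products of copies of $X$ lie in $\operatorname{Add}X$, or the argument via the additive length function $C\mapsto \ell_{\End X}\Hom(C,X)$, which is unchanged on passing from $X$ to $X^I$ and determines the set of indecomposable summands). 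This is fixable, but it is exactly the content you are silently assuming.

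The converse direction has a genuine gap, and you have in effect flagged it yourself without closing it. Two problems. First, the step you defer to ``pp-definable subgroups and the purity machinery'' is the entire mathematical content of Garavaglia's theorem; writing each $f_n$ as an $\End(X)$-combination of $r$ fixed maps after factoring through a finite subcoproduct does not make any chain stabilize, and no argument is supplied that does. Second, and more structurally: your strategy is to derive a contradiction from an infinite \emph{descending} chain of $\End(X)$-submodules of $\Hom(C,X)$. Even if carried out perfectly, this would only show that $\Hom(C,X)$ is artinian over $\End(X)$, which is strictly weaker than having finite length --- compare $\mathbb{Z}_{p^\infty}$ over its endomorphism ring $\mathbb{Z}_p$, which is artinian but not noetherian, hence not of finite endolength. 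So the proposed argument cannot reach the conclusion without a separate treatment of ascending chains (in the classical proof this is handled by working with the lattice of finitely definable subgroups, where the descending chain condition coming from $\Sigma$-pure-injectivity, combined with the hypothesis that all indecomposable summands of powers of $X$ are isomorphic to $X$, is parlayed into finiteness of the whole lattice). As written, the converse is not a proof.
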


 Let $A$ be a $k$-algebra. We consider the derived category $  D(\Mod A)$ of unbounded complexes of $A$-modules. It is compactly generated with $  D(\Mod A)^c$ being exactly $K^b(\proj A)$.
For every $X\in   D(\Mod A)$ and $i\in \mabb Z$, the i-th cohomology group $H^i(X)=\Hom(A,X[i])$ has a natural $\End X$-module structure. We have the following characterization for the endofinite objects
in $  D(\Mod A)$ \cite[Lemma 4.1,4.2]{kra02}.

\begin{lem}\label{gen}
\begin{enumerate}
 \item A complex $X\in   D(\Mod A)$ is an endofinite object if and only if  $H^i(X)$ has finite length as an $\End X$-module for every $i\in\mabb Z$.
\item Let $X$ be an endofinite complex in $  D(\Mod A)$. Then $H^i(X)$ is an endofinite $A$-module for all $i\in \mabb Z$.
\end{enumerate}
\end{lem}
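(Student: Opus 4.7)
The plan is to exploit the fact that $A$ is a compact generator of $D(\Mod A)$: the subcategory $D(\Mod A)^c \cong K^b(\proj A)$ is the thick subcategory generated by $A$, and one has $\Hom(A[-i], X) = \Hom(A, X[i]) = H^i(X)$ for every integer $i$.

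For part (1), the forward implication is immediate, since every shift $A[-i]$ is compact and the endofiniteness of $X$ then forces each $\Hom(A[-i], X) = H^i(X)$ to be of finite length over $\End X$. For the converse, I would introduce the full subcategory
\[
\mac C = \{\, C \in D(\Mod A)^c \mid \Hom(C, X[n]) \text{ has finite } \End X\text{-length for every } n \in \mabb Z\,\}
\]
and show that it is thick. Closure under shifts is tautological, closure under direct summands is immediate, and closure under extensions follows from the long exact sequence obtained by applying the cohomological functor $\Hom(-, X[n])$ to a distinguished triangle together with the fact that the middle term of a short exact sequence of $\End X$-modules with outer terms of finite length again has finite length. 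The hypothesis gives $A \in \mac C$, so $\mac C$ is a thick subcategory of $D(\Mod A)^c$ containing a compact generator and must coincide with $D(\Mod A)^c$. Specialising to $n = 0$ yields the endofiniteness of $X$.

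For part (2), I would apply (1) to conclude that $H^i(X)$ has finite length over $\End X$. The cohomology functor $H^i$ induces a ring homomorphism $\End X \to \End_A H^i(X)$ through which the $\End X$-action on $H^i(X)$ factors; hence every $\End_A H^i(X)$-submodule of $H^i(X)$ is automatically an $\End X$-submodule. Consequently $H^i(X)$ has finite length over its own $A$-endomorphism ring, which is precisely the definition of an endofinite $A$-module.

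I do not foresee a substantive obstacle. Part (1) is a standard devissage starting from the compact generator $A$, and part (2) is a straightforward comparison of module structures along a ring map. The one point that requires mild care is that $\mac C$ must be defined with the shift $n$ allowed to range over all of $\mabb Z$, so that the long exact sequence produced by a distinguished triangle yields finite-length terms on every side; restricting to $n = 0$ would not be enough to run the extension argument.
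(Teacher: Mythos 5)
Your argument is correct. Note that the paper itself offers no proof of this lemma: it is quoted verbatim from Krause (the reference given as Lemma 4.1 and 4.2 there), so there is no in-paper argument to compare against; your proof is the standard one and matches the expected source argument in both halves. For (1) the d\'evissage is sound because $D(\Mod A)^c=K^b(\proj A)$ is precisely the thick subcategory generated by the compact generator $A$, and your insistence on letting $n$ range over all of $\mabb Z$ in the definition of $\mac C$ is exactly the point that makes the two-out-of-three step work. For (2) the key observation is the one you make: the $\End X$-action on $H^i(X)=\Hom(A,X[i])$ by postcomposition factors through the ring homomorphism $\End X\to\End_A H^i(X)$, $f\mapsto H^i(f)$, so finite length over $\End X$ forces finite length over the larger ring $\End_A H^i(X)$, which is the definition of endofiniteness for the $A$-module $H^i(X)$.
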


A \emph{localizing subcategory} $\mac S$ of $\mac T$ is a triangulated subcategory of $\mac T$ closed under coproducts. If $\mac S$ is generated by compact objects from $\mac T$, then the inclusion $\mac S\rta \mac T$ has
a right adjoint $q:\mac T\rta \mac S$. Moreover, $\mac S$ is a compactly generated triangulated category. There are some relations between endofinite objects in $\mac S$ and endofinite objects in $\mac T$.

\begin{lem}
 Let $\mac S$ be a localizing subcategory of $\mac T$ which is generated by compact objects from $\mac T$, and $q:\mac T\rta\mac S $ be a right adjoint of the inclusion $i:\mac S\rta \mac T$.
 If X is an endofinite object in $\mac T$, then $q(X)$ is endofinite in $\mac S$.
\end{lem}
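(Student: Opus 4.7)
The plan is to reduce the statement to the defining property of endofiniteness by the $(i,q)$-adjunction, together with the standard fact that the compact objects of $\mathcal{S}$ coincide with the compact objects of $\mathcal{T}$ that lie in $\mathcal{S}$. So first I would verify this last fact: since $\mathcal{S}$ is generated by objects already compact in $\mathcal{T}$, and the inclusion $i\colon \mathcal{S}\to\mathcal{T}$ preserves coproducts, any compact object $C\in\mathcal{S}^c$ is also compact when regarded as an object of $\mathcal{T}$, i.e.\ $i(C)\in\mathcal{T}^c$.

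Next, fix $C\in\mathcal{S}^c$ and apply the adjunction isomorphism
\[
\Hom_{\mathcal{S}}(C,q(X))\;\cong\;\Hom_{\mathcal{T}}(i(C),X),
\]
sending $f\colon C\to q(X)$ to $\epsilon_X\circ i(f)\colon i(C)\to X$, where $\epsilon$ is the counit of the adjunction. The functor $q$ induces a ring homomorphism $\phi\colon \End_{\mathcal{T}}(X)\to\End_{\mathcal{S}}(q(X))$, $h\mapsto q(h)$, and through $\phi$ we may view $\Hom_{\mathcal{S}}(C,q(X))$ as an $\End_{\mathcal{T}}(X)$-module by restriction of scalars. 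The naturality square $\epsilon_X\circ iq(h)=h\circ\epsilon_X$ for $h\in\End_{\mathcal{T}}(X)$ shows immediately that the above adjunction isomorphism is $\End_{\mathcal{T}}(X)$-linear, where on the right we take the natural action by post-composition with $h$.

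Now since $X$ is endofinite in $\mathcal{T}$ and $i(C)\in\mathcal{T}^c$, the module $\Hom_{\mathcal{T}}(i(C),X)$ has finite length over $\End_{\mathcal{T}}(X)$. Transporting through the adjunction, $\Hom_{\mathcal{S}}(C,q(X))$ has finite length as an $\End_{\mathcal{T}}(X)$-module. But every $\End_{\mathcal{S}}(q(X))$-submodule of $\Hom_{\mathcal{S}}(C,q(X))$ is in particular stable under the action of $\End_{\mathcal{T}}(X)$ factored through $\phi$, so any strictly ascending chain of $\End_{\mathcal{S}}(q(X))$-submodules is a strictly ascending chain of $\End_{\mathcal{T}}(X)$-submodules and must therefore be finite. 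Hence $\Hom_{\mathcal{S}}(C,q(X))$ has finite length as an $\End_{\mathcal{S}}(q(X))$-module. As $C\in\mathcal{S}^c$ was arbitrary, this proves $q(X)$ is endofinite in $\mathcal{S}$.

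The only delicate point is the compatibility in the middle step: one must be sure that the $\End_{\mathcal{S}}(q(X))$-action pulled back through $\phi$ really matches the obvious post-composition action across the adjunction, so that the finite-length bound actually transfers. This is a purely formal check using naturality of $\epsilon$, but it is the step that carries the whole argument, since the abstract ``length decreases along ring homomorphisms'' principle needs the module identifications to be made honestly.
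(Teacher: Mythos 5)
Your argument is correct and is essentially the proof that the paper delegates to Krause's Lemma 1.3 without reproducing it: the adjunction isomorphism $\Hom_{\mathcal S}(C,q(X))\cong\Hom_{\mathcal T}(i(C),X)$, its $\End_{\mathcal T}(X)$-linearity via naturality of the counit, and the observation that finite length descends along the restriction of scalars through $\End_{\mathcal T}(X)\to\End_{\mathcal S}(q(X))$ is exactly the standard route. The one step stated a little too quickly is the transfer of compactness: that $i$ sends $\mathcal S^c$ into $\mathcal T^c$ does not follow just from $i$ preserving coproducts, but from Neeman's theorem that $\mathcal S^c$ is the thick subcategory of $\mathcal T^c$ generated by the chosen compact generators (equivalently, from the fact that $q$ preserves coproducts); since this is the standard fact you explicitly invoke, the proof stands.
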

\begin{proof}
 See \cite[Lemma 1.3]{kra99}
\end{proof}

In general, an endofinite object X in $\mac S$ is not necessary an endofinite object in $\mac T$.
Now we consider the fully faithful functor $F:K(\Inj A)\rta \uMod \hat{A}$  and its right adjoint $G:\uMod \hat{A}\rta K(\Inj A)$ in Proposition \ref{inj-ebding}.
\begin{cor}\label{bk1}
  If $X\in \uMod \hat{A}$ is an endofinite object, then $G(X)\in   K(\Inj A)$ is an endofinite object.
\end{cor}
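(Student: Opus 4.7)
The plan is to verify the finite length condition for $G(X)$ directly, by testing against compact objects of $K(\Inj A)$ and transferring the test to $\uMod \hat{A}$ via the adjunction $F \dashv G$.

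First I would note that $F$ preserves compact objects: its restriction to $K(\Inj A)^c \cong \db A$ is Happel's embedding, which by Proposition~\ref{inj-ebding} factors through $\umod \hat{A}$, and this is precisely the subcategory of compact objects of $\uMod \hat{A}$. Consequently, for any compact $C \in K(\Inj A)^c$, the object $F(C)$ lies in $(\uMod \hat{A})^c$, so by endofiniteness of $X$ the group $\Hom_{\uMod \hat{A}}(F(C), X)$ has finite length over $\End_{\uMod \hat{A}} X$.

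Next, the adjunction supplies a natural isomorphism
\[ \Phi_C \colon \Hom_{K(\Inj A)}(C, G(X)) \xrightarrow{\sim} \Hom_{\uMod \hat{A}}(F(C), X). \]
The decisive point is that $\Phi_C$ is equivariant for the action of $\End X$, where $\End X$ acts on the target by post-composition and on the source via the ring homomorphism $G\colon \End X \to \End G(X)$ followed by post-composition. This equivariance is just the naturality of $\Phi$ in $X$ applied to endomorphisms $\psi \in \End X$. Hence $\Hom(C, G(X))$ has finite length over $\End X$ (acting through $G$). Since every $\End G(X)$-submodule of $\Hom(C, G(X))$ is in particular invariant under the subring $G(\End X) \subseteq \End G(X)$ and hence is an $\End X$-submodule, the length over $\End G(X)$ is bounded above by its finite length over $\End X$. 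Thus $\Hom(C, G(X))$ has finite length over $\End G(X)$, proving $G(X)$ endofinite.

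I do not anticipate a serious obstacle; the only point that requires care is the equivariance of $\Phi_C$, which is a formal consequence of the naturality of the adjunction. A more categorical packaging of the same argument would identify the essential image $F(K(\Inj A))$ with a localizing subcategory of $\uMod\hat{A}$ generated by compacts and invoke the preceding lemma that right adjoints of such inclusions preserve endofinite objects, but the direct verification above is essentially that argument in more elementary language.
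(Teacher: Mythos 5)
Your proof is correct and follows essentially the same route as the paper, which states the corollary as an immediate application of the preceding lemma (the right adjoint of the inclusion of a localizing subcategory generated by compact objects preserves endofinite objects), using that $F$ is fully faithful and sends compacts to compacts. Your direct verification via the adjunction isomorphism and the restriction of scalars along $G\colon \End X \to \End G(X)$ is exactly that argument unwound, as you yourself observe.
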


\begin{defn}
 Let $\mac T$ be a compactly generated triangulated category. An object $E\in \mac T$ is called generic if it is an indecomposable endofinite object and not compact object. $\mac T$ is called generically trivial if it does
not have any generic objects.
\end{defn}

Like generic modules \cite{cb92}, generic objects can be used to characterize the structure of triangulated category.
\begin{exm}
 Let $A$ be a finite dimensional k-algebra and $  D(\Mod A)$ be a compactly generated triangulated category. If there exists a generic module $M\in \Mod A$, then $M$ viewed as a complex concentrated in degree zero, is a
generic object in $  D(\Mod A)$. If $gl.\dim A=\infty$, then any indecomposable object $X$ in $\db A$ not quasi-isomorphic to a perfect complex is a generic object in $  D(\Mod A)$.
\end{exm}

\begin{prop}\label{ger1}
 Let A be a finite dimensional k-algebra with finite global dimension. Then $A$ is derived discrete if and only if  $  D(\Mod A)$ does not contain a generic object $Y$ such that $F(Y)$ is support-finite .
\end{prop}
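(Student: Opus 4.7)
My plan is to prove each direction by reducing to Crawley-Boevey's characterization of endofinite modules for finite-dimensional algebras, together with the classification of derived discrete algebras recalled in Section 2.2.

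For the direction $(\Rightarrow)$, the first observation is that the gentle one-cycle non-clock case is excluded by the hypothesis of finite global dimension: a gentle algebra whose quiver contains a cycle always has infinite global dimension (one sees this by following the projective resolution of a simple at a vertex on the cycle and noting that it never terminates). Hence the classification forces $A$ to be derived equivalent to a hereditary algebra of Dynkin type, which is in particular an iterated tilted algebra of Dynkin type and therefore representation finite. By Crawley-Boevey's theorem, every endofinite $A$-module is then finite dimensional. Now suppose for contradiction that $Y\in D(\Mod A)$ is a generic object with $F(Y)$ support-finite. Proposition \ref{func-1}(2), together with finite global dimension, forces $Y \in D^b(\Mod A)$, and Lemma \ref{gen}(2) gives that each $H^i(Y)$ is endofinite over $A$, hence finite dimensional. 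Then $Y$ lies in $\db A = K^b(\proj A) = D(\Mod A)^c$, contradicting the non-compactness of $Y$.

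For the direction $(\Leftarrow)$, I would argue the contrapositive. Assume $A$ is not derived discrete; then there is a homological dimension vector $h$ realised by an infinite family $\{X_\lambda\}_{\lambda\in\Lambda}$ of pairwise non-isomorphic indecomposables in $\db A$. By Proposition \ref{func-1}(1), all the images $F(X_\lambda)\in\umod \hat A$ are supported in a common finite window $[-N,N]$ of degrees and have uniformly bounded dimension in each component, so they form an infinite bounded family of pairwise non-isomorphic indecomposable modules over a suitable finite-dimensional truncation $B$ of $\hat A$. Crawley-Boevey's theorem applied to $B$ produces a generic $B$-module $\tilde G$; viewed as an $\hat A$-module supported in $[-N,N]$, it is indecomposable, endofinite, and not finitely generated. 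I would then transport $\tilde G$ back through the adjunction $F\dashv G$ of Proposition \ref{inj-ebding}, invoking Corollary \ref{bk1} to preserve endofiniteness, and obtain a generic object $Y \in D(\Mod A)$; the bounded support of $\tilde G$ combined with Proposition \ref{func-1}(2) ensures that $Y$ has bounded cohomology, and then Proposition \ref{func-1}(1) confirms that $F(Y)$ is support-finite. A cleaner alternative is to work directly inside $D(\Mod A)$ using the Ziegler spectrum for compactly generated triangulated categories, extracting $Y$ as a cluster point of $\{X_\lambda\}$ and using the uniform bounds on $\Hom_{D(\Mod A)}(C,X_\lambda)$ (for $C$ compact) to ensure endofiniteness and boundedness of cohomology of the limit.

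The main obstacle is the construction of the generic object $Y$ in the converse direction: one needs a Crawley-Boevey style limit argument carried over to the triangulated setting, and regardless of whether one routes through $\hat A$ or through the Ziegler spectrum of $D(\Mod A)$ itself, one must track bounds on endolength and cohomological support carefully, and verify that indecomposability, non-compactness, and support-finiteness of $F(Y)$ all survive the construction. The forward direction, by contrast, reduces cleanly to module-theoretic Crawley-Boevey once the finite global dimension observation eliminates the gentle one-cycle case.
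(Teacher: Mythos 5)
Your converse direction (not derived discrete implies the existence of a generic $Y$ with $F(Y)$ support-finite) follows essentially the paper's route: push an infinite family of indecomposable compact objects with a fixed homological dimension through $F$, land in a finite-dimensional truncation $\hat{A}_{m,n}$ of $\hat{A}$, apply Crawley-Boevey there to produce a generic module, and transport it back via the right adjoint $G$ using Corollary \ref{bk1}. That half is sound.

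The forward direction, however, contains a fatal error. You claim that a gentle algebra whose quiver contains a cycle always has infinite global dimension, and you use this to conclude that a derived discrete algebra of finite global dimension must be derived equivalent to a hereditary algebra of Dynkin type, hence representation finite. This is false: the paper's own first example, the quiver with arrows $\a:1\to 2$, $\b:2\to 1$ and the single relation $\b\a=0$, is a gentle one-cycle derived discrete algebra with $gl.\dim A=2$ (the zero relation does not propagate because $\a\b$ is not a relation, so the projective resolutions of the simples terminate). For such algebras your reduction collapses; worse, $D(\Mod A)$ genuinely \emph{does} contain generic objects for them (Lemma \ref{str2}, built from infinite strings over $\hat{A}$) --- these generic objects simply fail to have support-finite image under $F$, which is precisely the distinction the ``support-finite'' hypothesis in the proposition is designed to capture and which your argument erases. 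The paper's proof of this direction avoids the classification entirely: given a generic $Y$ with $F(Y)$ support-finite, one notes that $F(Y)$ is a generic object of $\uMod\hat{A}$ that restricts to a generic module over some finite-dimensional $\hat{A}_{m,n}$, and Crawley-Boevey then yields infinitely many finite-dimensional indecomposables of a single endolength, contradicting derived discreteness. A smaller point: you invoke Proposition \ref{func-1}(2) to force $Y\in D^b(\Mod A)$, but that statement already assumes $Y\in D^b(\Mod A)$ as a hypothesis, so as written that step is circular.
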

\begin{proof}
Since $A$ has finite global dimension, there is a triangle equivalence $F: D(\Mod A)\rta \uMod \hat{A}$. The functor $F$ restricted to the subcategory of compact objects is again a triangle equivalence between the full
subcategories of compact objects.

First, suppose that $A$ is not derived discrete, then there is a family of infinitely many indecomposable compact objects $\{X_i\}_{i\in I}\in   D(\Mod A)$ with the same homological dimension
$d=(d_i)_{i\in\mabb Z},d_i\in \mabb N^{A}$.
 By \cite[lemma 3.6]{gk02}, the objects $F(X_i)$ in the family $\{F(X_i)\}_{i\in I}$ are endofinite objects in $\uMod \hat{A}$. Moreover,
the family  $\{F(X_i)\}_{i\in I}$ can be expressed as support-finite $\hat{A}$-modules with the same bounded by Proposition \ref{func-1} .

 Let $\hat{Q}_A$  be the quiver of $\hat{A}$, and $1=e_1+\ldots +e_n$ be the decomposition of identity of primitive idempotent of $A$. Then $\hat{A}$ has primitive idempotents $\mac E_j(e_i)$ for $j\in \mabb Z$. There
are bijection between the vertices of  $\hat{Q}_A$ and  the set $\{\mac E_j(e_i)\}$.
Let $f_j=\sum^n_{i=1}\mac E_j(e_i)$, and $\hat{A}_{m,n}=(\sum^n_{j=m} f_j)\hat{A}(\sum^n_{j=m} f_j)$ be a finite dimensional algebra.
 Thus $ F(X_i)$ could be viewed as $\hat{A}_{m,n}$-module $Y=Y^{red}$ for some $Y\in \Mod \hat{A}_{m,n}$, for some $m,n\in\mabb Z$.

There exists a generic module \cite{cb91} $M$ in $\Mod \hat{A}_{m,n}$, since there are infinitely many indecomposable endofinite $\hat{A}_{m,n}$-modules with same endolength. We can view $M$ as an $\hat{A}$-module
with finite endolength and support-finite.
 By Corollary \ref{bk1}, the complex $G(M)\in   D(\Mod A)$ is an endofinite object, moreover it is a generic object, since the functor $G$ is a triangle equivalence. The object $FG(M)\in \uMod \hat{A}$ is exactly $M$, and is support-finite.

Conversely, assume that there exists a generic object $Y\in  D(\Mod A)$ such that $F(Y)$ is support-finite, we show that $A$ is not derived discrete.
For any compact object $C\in  D(\Mod A)$, we have $\Hom(C,Y)\cong\underline{\Hom}_{\hat{A}}(FC,FY)$ and $\End(Y)\cong \underline{\End}(FY)$.
Thus $F(Y)\in\uMod \hat{A}$ is a generic object. It corresponds a generic module over $\hat{A}_{m,n}$ for some finite dimensional algebra $\hat{A}_{m,n}$. Therefore, there are infinitely many finitely generated
 $\hat{A}_{m,n}$-modules has the same endolength with $F(Y)$ \cite{cb91}. This contradicts to the fact that $\hat{A}$ is representation discrete \cite{v01}.
\end{proof}

\subsection{Generically trivial derived categories}
Assume $A=kQ/I$ is a gentle algebra, where $Q$ has one cycle not satisfying the clock condition \cite{as87} and $I=(\rho)$. Choose a generating set $\rho$ of $I$, by \cite[Proposition 4]{ri97},
the bounded quiver  $(\hat{Q},\mabb Z\rho)$ is
 an expanded gentle quiver. This means that the vertices $a$ are of the following two cases: either $a$ is a crossing vertex: there are exactly two arrows ending in $a$ and two arrows starting in $a$, or else it is a
transition vertex: there just one arrow $\a$ ending in $a$ and just one arrow $\b$, starting in $a$, and $\a\b\notin \rho$. Moreover, if $(Q,\rho)$ is expanded, and $p$ is any path of length at least one in $(Q,\rho)$,
 then there exactly one arrow $\a$ and exactly one arrow $\b$ such that $\a p\b$ is a path in $(Q,\rho)$.

Let  $\hat{A}=k\hat{Q}/\hat{I}$ be the repetitive algebra of $A$. Consider the string algebra $\bar{\hat{A}}=k(\hat{Q},\bar{\hat{\rho}})$, where $\bar{\hat{\rho}}$ is the union of $\hat{\rho}$ and the set of all full paths.
 By the construction of $(\hat{Q},\bar{\hat{\rho}})$, for every transition vertex
there is a unique maximal path starting at and ending in it respectively. For a crossing vertex, there are precisely two maximal paths starting at and ending in it respectively.

Let $A=kQ/I,I=(\rho)$ be a gentle one cycle algebra and derived discrete, and $\hat{A}=k\hat{Q}/\hat{I}$ be the repetitive algebra of $A$.  Denote by $\mu$ the shift map on $(\hat{Q},\hat{\rho})$:
$a[i]\mapsto a[i+1],$ and  $\a[i]\mapsto \a[i+1]$, where $a\in Q_0$ and $\a\in Q_1$. Then $\mu$ is an automorphism of $(\hat{Q},\hat{\rho})$. There is a canonical embedding $\iota:(Q,\rho)\rta (\hat{Q},\hat{\rho})$ given by
$a\mapsto a[0]$ and $\a\mapsto \a[0]$.
Denote by $\mac R$ the finite subset of arrows in $(\hat{Q},\hat{\rho})$ consisting of all arrows $\a[0]$ with $\a\in Q_1$ and the connecting arrows starting at $a[0]$. Then every arrow in $\hat{Q}$ is a $\mu$-shift of an
element in $\mac R$.

The following result is known for experts.  We give an explicit proof for reader's convenience.
\begin{lem}\label{str1}
If $A=kQ/I$ is a gentle one cycle  algebra and derived discrete, then the quiver $(\hat{Q},\hat{\rho})$ of the repetitive algebra $\hat{A}$ has a string of infinite length.
\end{lem}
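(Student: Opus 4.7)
The plan is to exploit the failure of the clock condition on the cycle of $Q$ in order to build an infinite string in $(\hat{Q},\hat{\rho})$ by iterating a carefully chosen finite string around the cycle together with the shift automorphism $\mu$. Let $C$ be the unique cycle of $Q$. Because $A$ is derived discrete, $C$ violates the clock condition, so the difference $d$ between the numbers of clockwise and counterclockwise relations on $C$ is nonzero. I would fix a lift of $C$ to level $0$ in $\hat{Q}$ and choose a starting vertex $v[0]$ on this lifted cycle.

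Next I would construct a finite string $w$ in $(\hat{Q},\hat{\rho})$ from $v[0]$ to $v[d]$ corresponding to a single traversal of $C$. The main tool is the expanded gentle property of $(\hat{Q},\hat{\rho})$, invoking Ringel's result recorded just before the lemma: at every vertex of $\hat{Q}$, each incoming or outgoing arrow has a unique continuation into a string, either along the copy $\iota(Q)[i]$ at the current level or via a connecting arrow to level $i\pm 1$. Walking around $C$ and making these forced choices, each encounter with a clockwise relation contributes $+1$ to the total level shift while each counterclockwise relation contributes $-1$, so the net shift after one traversal is exactly $d$.

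Since $\mu$ is an automorphism of $(\hat{Q},\hat{\rho})$, the shifted word $\mu^{d}(w)$ is again a string, from $v[d]$ to $v[2d]$. The concatenation $w\cdot \mu^{d}(w)\cdot \mu^{2d}(w)\cdots$ is then an infinite word; it is a string because every finite subword is (up to $\mu$-shift) either contained in $w$ or spans a junction $v[nd]$, where the expanded gentle property again guarantees that the last letter of $\mu^{(n-1)d}(w)$ and the first letter of $\mu^{nd}(w)$ compose into a valid string. In particular the infinite word contains no element of $\hat{\rho}$ and no consecutive inverse pair, so it is a string of infinite length as required.

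The main obstacle will be the middle step: verifying that the net level shift from one traversal of $C$ equals exactly the clock-condition discrepancy $d$. This requires a careful case analysis at each vertex of $C$ of whether the string continues via a same-level arrow (transition-vertex behavior) or via a connecting arrow (crossing-vertex behavior forced by a relation), together with a bookkeeping of how clockwise versus counterclockwise relations on $C$ translate into up versus down connecting arrows in $\hat{Q}$. Reducing to the distinguished finite set $\mac R$ of arrows described just above the lemma should keep this bookkeeping finite and explicit; once the accounting is done, the existence of the infinite string follows immediately from the iteration in the third step.
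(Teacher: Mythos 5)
Your overall strategy --- produce a finite string in $(\hat{Q},\hat{\rho})$ with a nonzero net level shift, then iterate it using the shift automorphism $\mu$ to obtain an infinite string --- is the same engine that drives the paper's proof. But the two arguments produce the finite seed string quite differently, and the difference matters. The paper does not traverse the cycle $C$ and does not compute any level shift. It instead builds a forced zigzag word $\ldots p_np_{n-1}\ldots p_2p_1\alpha$ of alternating minimal direct paths between crossing vertices (using only the expanded gentle property), applies the pigeonhole principle to the finite set $\mathcal R$ of arrow types to find two letters $\gamma[j]$ and $\gamma[k]$ that are $\mu$-shifts of the same arrow, and then invokes Assem--Skowro\'nski's Theorem B to conclude that the failure of the clock condition forces $j\neq k$; the segment between the two occurrences is then propagated by powers of $\mu^{j-k}$ in both directions. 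So the paper only ever needs the qualitative statement ``the shift is nonzero,'' and it gets that by citation rather than by bookkeeping.

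The step you yourself flag as the main obstacle --- that a single traversal of $C$ lifts to a string in $(\hat{Q},\hat{\rho})$ whose net level shift equals exactly the clock discrepancy $d$ --- is precisely the content you would have to supply from scratch, and as written it is a genuine gap rather than a routine verification. It is not clear a priori that ``one traversal of $C$'' is even well defined as a string in $\hat{Q}$: at each relation on the cycle the walk must detour through connecting arrows and possibly several transition vertices before rejoining a copy of $C$, and you must check both that this detour is forced and that it contributes the sign you claim ($+1$ for clockwise, $-1$ for counterclockwise relations) with no other contributions to the level. That accounting is essentially equivalent to the Assem--Skowro\'nski result the paper cites, so your route amounts to reproving Theorem B of \cite{as87} inside the repetitive quiver. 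The iteration and concatenation steps of your argument are fine (periodicity of the forced continuation under $\mu$ does guarantee that the junctions $v[nd]$ compose correctly), but the proof is not complete until the level-shift computation is actually carried out or replaced, as in the paper, by the pigeonhole argument plus the cited theorem.
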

\begin{proof}
 Let $\hat{A}=k\hat{Q}/\hat{I}$ be the repetitive algebra of $A$. We know that every vertex in $\hat{Q}_0$ is either a crossing or a transition vertex. Given any arrow $\a\in\hat{Q}_1$, we shall find an infinite string starting
 in $s(\a)$.

If $t(\a)$ is a crossing  vertex, there exists a unique minimal direct path $p$ in $\hat{A}$  with $t(p)=t(\a)$ and $s(p)$ crossing.  We denote it by $p_1$.  Consider the vertex $s(p_1)$, there is a unique minimal direct path $p_2\notin (\hat{\rho})$
with $s(p_2)=s(p_1)$. Moreover, $t(p_2)$ is a crossing vertex.
%Consider the vertex $t(p_2)$, we get a unique maximal path $p_3$ with $t(p_3)=t(p_2)$.
 Repeating this procedure, we get a sequence $\ldots p_np_{n-1}\ldots p_2p_1\a$ with $t(p_{2i})=t(p_{2i+1}),s(p_{2i})=s(p_{2i-1})$ for
$i\in\mabb N$ and $t(\a)=t(p_1)$.
\begin{center}
\begin{tabular}{ccc}
$\xy <1cm,0cm>:
(-1,1)*+{\circ}="1",
(0,0)*+{\circ}="2",
(1,1)*+{\circ}="3",
(2,0)*+{\circ}="4",
(3,1)*+{\circ}="5",
(4,0)*+{\circ}="6",
(5,1)*+{\circ}="7",
(6,0.5)*+{\ldots},
(-0.5,0.6)*+{\alpha},
(0.4,0.6)*+{p_1},
(1.6,0.6)*+{p_2},
(2.4,0.6)*+{p_3},

\ar@{->}"1";"2"<0pt>
\ar@{->}"3";"2"<0pt>
\ar@{->}"3";"4"<0pt>
\ar@{->}"5";"4"<0pt>
\ar@{->}"5";"6"<0pt>
\ar@{->}"7";"6"<0pt>

%\ar@{->}"5,0";"6,-1"<0pt>
%\ar@{->}"5,0";"6,1"<0pt>

\endxy$
\end{tabular}
\end{center}

If $t(\a)$ is a transition vertex, there exists a unique minimal direct path $q$ such that $s(q)=t(\a),p\a\notin (\hat{\rho})$ and $t(q)$ is a crossing vertex.
Then we get a sequence $\ldots p_np_{n-1}\ldots p_2p_1q\a$ with $t(p_{2i})=t(p_{2i+1}),s(p_{2i})=s(p_{2i-1})$ for
$i\in\mabb N$ and $t(q)=t(p_1)$.

There exist at least two arrows $\a_1$ and $\a_2$ in the above sequences with $\a_1=\gamma[j]$ and $\a_2=\gamma[k]$ where $\gamma\in \mac R$ since $\mac R$ is a finite set.
If $(Q,\rho)$ does not satisfy the clock condition, then $i\neq k$ for any pair of arrows $(\a_1,\a_2)$. Otherwise there exists a non-oriented cycle $C= \gamma \ldots\gamma\notin (\hat{\rho})$ \cite[Theorem B]{as87}.
From the finite sequence $\gamma[i]q'\gamma[k]$ and the automorphism $\mu$ of $\hat{Q}$, we get a sequence of infinite length
\[\dots \gamma[2i-k ]q'[i-k]\gamma[i]q'\gamma[k]q'[k-i]\gamma[2k-i]\ldots.\]
This sequence corresponds to a string of infinite length in $(\hat{Q},\hat{\rho})$.
\end{proof}
The existence of infinite strings of $\hat{A}$  implies the existence of generic objects in $\uMod \hat{A}$.
\begin{lem}\label{str2}
 If $A=kQ/I$ is a gentle one cycle  algebra and derived discrete, then there is a generic object in $\uMod\hat{A}$.
\end{lem}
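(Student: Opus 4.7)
My plan is to use the $\mu$-periodicity implicit in Lemma~\ref{str1} to extract a genuine band in a finite-dimensional convex subquotient of $\hat{A}$, apply the classical Crawley--Boevey / Ringel construction of generic band modules for string algebras, and finally verify that the resulting module survives the passage to $\uMod\hat{A}$ as a generic object.

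First I would refine Lemma~\ref{str1} to observe that the infinite string $w$ produced there is \emph{$\mu$-periodic}. Indeed, in its construction every arrow appearing in $w$ lies in the finite set $\mac R$ up to a $\mu$-shift, and when the clock condition fails consecutive occurrences of the same pattern are shifted by a fixed nonzero step; hence $w$ has the form $\cdots\mu^{-1}(v)\cdot v\cdot\mu(v)\cdot\mu^{2}(v)\cdots$ for a finite block $v$. Picking $m\leq n$ large enough so that the convex subquotient $\hat{A}_{m,n}=(\sum_{j=m}^{n}f_j)\hat{A}(\sum_{j=m}^{n}f_j)$ introduced in the proof of Proposition~\ref{ger1} contains two consecutive periods $v\cdot\mu(v)$, this block descends to a proper band $b$ of the finite-dimensional gentle string algebra $\hat{A}_{m,n}$.

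Next, I would invoke Ringel's band-module construction over the finite-dimensional string algebra $\hat{A}_{m,n}$: to the band $b$ and the generic simple $k[T,T^{-1}]$-module $k(T)$ one associates an indecomposable band module $M_0=M(b,k(T))$ with $\End_{\hat{A}_{m,n}}(M_0)\cong k(T)$. A direct dimension count shows that $\Hom_{\hat{A}_{m,n}}(P,M_0)$ is finite-dimensional over $k(T)$ for every finitely generated projective $\hat{A}_{m,n}$-module $P$, so by the module-theoretic characterization of endofiniteness $M_0$ is an endofinite (yet infinite-dimensional) $\hat{A}_{m,n}$-module. Restriction of scalars along the canonical surjection $\hat{A}\twoheadrightarrow\hat{A}_{m,n}$ produces an indecomposable endofinite $\hat{A}$-module, still denoted $M_0$, concentrated on the vertices $\{f_m,\ldots,f_n\}$.

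Finally, it remains to verify that $M_0$ remains nonzero and noncompact in $\uMod\hat{A}$. Because $\hat{A}$ is locally bounded and self-injective, every indecomposable injective $\hat{A}$-module is finite-dimensional, whereas $M_0$ is infinite-dimensional and has no finite-dimensional direct summand; hence its maximal injective submodule vanishes and its image in $\uMod\hat{A}$ is nonzero and indecomposable. Since compact objects of $\uMod\hat{A}$ are images of finitely generated $\hat{A}$-modules, $M_0$ is not compact, so it is the desired generic object. The main obstacle in this plan is the reduction step: one must confirm that the $v\cdot\mu(v)$ block in $\hat{A}_{m,n}$ really is a \emph{band}, i.e.\ cyclic, not a proper power, and inequivalent to its inverse under rotation, so that Ringel's generic band module construction applies and $M_0$ is genuinely indecomposable; once this combinatorial fact is in hand, the rest is standard.
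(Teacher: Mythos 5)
Your plan founders at exactly the point you yourself flag as ``the main obstacle'': the $\mu$-periodic block of the infinite string does not descend to a band in $\hat{A}_{m,n}$. The truncation $\hat{A}_{m,n}=(\sum_{j=m}^{n}f_j)\hat{A}(\sum_{j=m}^{n}f_j)$ does not identify a vertex $x$ with its shift $\mu^{d}(x)$; it merely cuts out a convex piece of $\hat{Q}$ in which $x$ and $\mu^{d}(x)$ remain distinct vertices. Since the failure of the clock condition forces the period of the string to be a \emph{nonzero} shift $\mu^{d}$ (this is precisely the content of the step ``$i\neq k$'' in the proof of Lemma~\ref{str1}), your block $v\cdot\mu(v)$ is a walk between two distinct vertices, not a cyclic word, so Ringel's band-module construction has nothing to act on. Worse, the obstruction is not merely technical: if some $\hat{A}_{m,n}$ admitted a band $b$, the family $M(b,\lambda)$, $\lambda\in k^{*}$, would give infinitely many finite-dimensional indecomposables with a fixed dimension vector, contradicting the representation discreteness of $\hat{A}$ that the paper invokes (via \cite{v01}) in the proof of Proposition~\ref{ger1}. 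So for a derived discrete gentle one-cycle algebra there simply is no band available, and no $k(T)$-parametrized generic module of Crawley--Boevey type can exist over any $\hat{A}_{m,n}$; your third paragraph's verification step cannot be completed.

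The paper's proof avoids this entirely: it takes the two-sided infinite string $w$ from Lemma~\ref{str1} itself and forms the associated infinite string module $M_s$ over $\hat{A}$. Because $w$ escapes to infinity through the levels of $\hat{Q}$, each vertex is visited only finitely often, so $M_s$ is locally finite ($\dim\Hom_{\hat{A}}(P,M_s)<\infty$ for every indecomposable projective $P$), whence $\Hom(N,M_s)$ is finite dimensional for every finite-dimensional $N$; it is indecomposable by \cite{kra92}; and it is infinite dimensional, hence not compact in $\uMod\hat{A}$. Genericity here comes from an infinite-length string with trivial parameter, not from a band. If you wanted to salvage the band idea you would have to pass to an orbit algebra $\hat{A}/\langle\mu^{d}\rangle$ rather than to a truncation, which changes the category in question and is not what the lemma requires.
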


\begin{proof}
 By Lemma \ref{str1}, there exists a string of infinite length in $(\hat{Q},\hat{\rho})$, and the string corresponds to an indecomposable representation of $(\hat{Q},\hat{\rho})$ \cite{kra92}. Moreover, the corresponding string $\hat{A}$-module $M_s$ is locally finite, i.e we have that
$$\dim \Hom_{\hat{A}}(P,M_s)<\infty,$$
 for every indecomposable projective $\hat{A}$-module $P$.
The Hom-space $\Hom(S,M_s)$ is finite dimensional for every simple $\hat{A}$-module $S$.  This implies that for every finite dimensional $\hat{A}$-module $N$, $\Hom(N,M_s)$ is finite
 dimensional. Thus $M_s$ is a generic object in $\uMod \hat{A}$.
\end{proof}

\begin{thm}\label{maint}
  Let $A= kQ/I$ be a finite dimensional $k$-algebra. Then $  D(\Mod A)$ is generically trivial if and only if $A$ is derived equivalent to a hereditary algebra of Dynkin type.
\end{thm}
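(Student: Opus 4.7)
The plan is to prove each direction separately, with Vossieck's classification (Proposition 2.6) as the backbone of the harder direction.

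\emph{Sufficiency.} Generic triviality of $D(\Mod A)$ is a derived invariant: by Keller's unbounded extension, every tilting equivalence $D^b(\mod A) \simeq D^b(\mod B)$ lifts to a triangle equivalence $D(\Mod A) \simeq D(\Mod B)$, and the notions of compact, endofinite and generic object are intrinsic to the triangulated structure. So I may assume $A = kQ$ is hereditary of Dynkin type. Since $\Ext_A^2 = 0$, the obstructions to splitting a complex into its cohomology vanish, and every $X \in D(\Mod A)$ decomposes as $X \cong \bigoplus_{i \in \mathbb{Z}} H^i(X)[-i]$. Auslander's theorem on representation finite artin algebras guarantees that each $A$-module is a coproduct of finite-dimensional indecomposables. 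Consequently every indecomposable of $D(\Mod A)$ is a shift of a finite-dimensional indecomposable and therefore lies in $D^b(\mod A) = K^b(\proj A) = D(\Mod A)^c$, so no generic object exists.

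\emph{Necessity.} Assume $D(\Mod A)$ is generically trivial. First, $A$ must have finite global dimension: otherwise some simple $S$ has infinite projective dimension, and viewed as a stalk complex it is indecomposable, endofinite (of $\End$-length one, since $k$ is algebraically closed) and not quasi-isomorphic to a perfect complex, hence generic by the Example preceding Proposition \ref{ger1}, a contradiction. Proposition \ref{ger1} now applies and forces $A$ to be derived discrete, and Vossieck's classification then yields two cases: either $A$ is derived equivalent to a hereditary algebra of Dynkin type, which is the desired conclusion, or $A$ is Morita equivalent to a gentle one-cycle algebra whose cycle fails the clock condition. In the second case Lemma \ref{str2} produces a generic object $M_s \in \uMod \hat{A}$. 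Because $A$ has finite global dimension, $F: D(\Mod A) \to \uMod \hat{A}$ is a triangle equivalence (combining $D(\Mod A) \simeq K(\Inj A)$ with Proposition \ref{inj-ebding}), so $F^{-1}(M_s)$ is a generic object of $D(\Mod A)$, contradicting the hypothesis and ruling out the second case.

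\emph{Main obstacle.} The delicate point is the final transfer: I must verify that $F^{-1}(M_s)$ is genuinely non-compact. Indecomposability and endofiniteness transfer automatically along the equivalence $F$, but non-compactness requires matching compact objects on both sides; under the equivalence, compacts of $D(\Mod A) = K^b(\proj A)$ correspond to the finite-dimensional objects of $\umod \hat{A}$, and the string module $M_s$ built from the infinite-length string of Lemma \ref{str1} is infinite-dimensional, so its preimage is not compact. A secondary check worth flagging is the use of Keller's unbounded extension in the sufficiency direction, since the hypothesis concerns $D(\Mod A)$ rather than $D^b(\mod A)$.
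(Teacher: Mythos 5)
Your argument for the necessity direction is essentially the paper's own proof: rule out infinite global dimension via a non-perfect indecomposable stalk complex, invoke Proposition \ref{ger1} to force derived discreteness, and then use Vossieck's classification together with Lemma \ref{str2} and the equivalence $D(\Mod A)\simeq \uMod\hat{A}$ to eliminate the gentle one-cycle case. The sufficiency direction, which the paper simply omits ("we only need to show the `only if' part"), you fill in correctly via the splitting of complexes over a hereditary algebra and representation-finiteness of Dynkin path algebras, so the proposal matches the paper's approach and adds nothing that conflicts with it.
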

\begin{proof}
We only need to show the 'only if' part.
Assume there is no generic object in $  D(\Mod A)$, we show that $A$ is derived equivalent to a hereditary algebra of Dynkin type

If $A$ has infinite global dimension, then there exists objects in $\db A$ but not in $  D(\Mod A)^c\cong K^b(\proj A)$. Since $\db A$ is a Hom-finite $k$-linear triangulated category, there exists an indecomposable
object $M\in \db A$ not in $K^b(\proj A)$ which is a generic object in $  D(\Mod A)$.

Assume $gl.\dim A<\infty$. If $A$ is not derived discrete, then there is a triangle equivalence $F:  D(\Mod A)\rta\uMod \hat{A}$. By Proposition \ref{ger1}, there exists a generic object in $\uMod\hat{A}$. Generic objects are preserved under
triangle equivalences. Thus there exists a generic object in $  D(\Mod A)$.
If $A$ is derived discrete and not derived equivalent to an algebra of Dynkin type, then $A$ is a gentle algebra with exactly one cycle in the quiver  $Q$ of $A$ not satisfying the clock condition.
By Lemma \ref{str2}, there exists a generic object in $\uMod \hat{A}$, therefore in $  D(\Mod A)$.
Thus  $A$ is derived equivalent to  a hereditary algebra of Dynkin type.
\end{proof}
\subsection{Applications and examples}
\begin{defn}
If $\mathcal{T}$ is a compactly generated triangulated
category, a triangle  \[\xymatrix{L\ar[r]^f&M\ar[r]^g&N\ar[r]&\Sigma
L}\] in $\mathcal{T}$ is pure-exact if for every object $C$ in
$\mathcal{T}^c$ the induced sequence
\[\xymatrix{0\ar[r]&\mathcal{T}(C,L)\ar[r]&\mathcal{T}(C,M)\ar[r]&\mathcal{T}(C,N)\ar[r]&0}
\] is exact. $\mac T$ is
pure-semisimple if every pure exact triangle splits.
\end{defn}

\begin{prop}\cite[Theorem 12.20]{bel00}\label{ssp1}
 For an Artin algebra $A$, the following are equivalent
\begin{enumerate}
\item $  D(\Mod A)$ is pure semisimple.
\item $A$ is derived equivalent to a hereditary algebra of Dynkin type.
\end{enumerate}

\end{prop}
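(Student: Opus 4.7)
The plan is to derive this proposition as a direct consequence of Theorem \ref{maint}, exploiting the tight connection between pure-semisimplicity, endofiniteness, and generic objects.

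For the implication (1)$\Rightarrow$(2), I would argue that pure-semisimplicity of $D(\Mod A)$ forces $D(\Mod A)$ to be generically trivial, after which Theorem \ref{maint} delivers the conclusion. The key input is Krause's theorem that every endofinite object in a compactly generated triangulated category is pure-injective. In a pure-semisimple triangulated category, by \cite{bel00} (or directly by standard arguments on pure-exact triangles and Krull-Schmidt decompositions), every pure-injective object is a coproduct of indecomposable compact objects. Consequently, any indecomposable pure-injective object is compact. Applied to a hypothetical generic object $E$, which is by definition indecomposable, endofinite, and non-compact, this gives an immediate contradiction: $E$ would be pure-injective and hence compact. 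Thus no generic object exists, and Theorem \ref{maint} concludes that $A$ is derived equivalent to a hereditary algebra of Dynkin type.

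For the implication (2)$\Rightarrow$(1), I would first observe that pure-semisimplicity of $D(\Mod A)$ is a derived invariant, so it suffices to verify it when $A = kQ$ for a Dynkin quiver $Q$. For such $A$ the bounded derived category $\db A = K^b(\proj A)$ is locally finite in the Auslander-Reiten sense, with only finitely many indecomposables up to shift and Auslander-Reiten translation, and the Auslander-Reiten quiver of $K^b(\proj A)$ is a disjoint union of components of shape $\mathbb{Z}\Delta$. In this setting every compact object has a local endomorphism ring and the category is Hom-finite, so the arguments of \cite{bel00} (or equivalently the classical proof for module categories transported to the derived setting via the tilting equivalence) show that every pure-exact triangle splits.

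The main obstacle is the converse direction (2)$\Rightarrow$(1): although the forward direction reduces neatly to our main theorem via Krause's pure-injectivity result, proving pure-semisimplicity for Dynkin hereditary algebras intrinsically requires checking that every pure-exact triangle in $D(\Mod kQ)$ splits, which in turn rests on a classification of indecomposable pure-injectives in $D(\Mod kQ)$ (they are exhausted by shifts of the finitely many indecomposable modules). I would either cite the classification directly or sketch it by transferring the corresponding module-theoretic result of Auslander for representation-finite algebras to the derived category through Happel's description of $D(\Mod kQ)$ when $Q$ is Dynkin.
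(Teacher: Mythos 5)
The first thing to note is that the paper does not prove this proposition at all: it is imported verbatim as \cite[Theorem 12.20]{bel00}, and the paper's logic runs in the opposite direction to yours --- Proposition \ref{ssp1} is an external input that is later \emph{combined} with Theorem \ref{maint} to deduce the corollary that pure semisimplicity of $D(\Mod A)$ is equivalent to generic triviality. Your proposal essentially reconstructs that corollary's argument and presents it as a proof of the proposition itself. This is not circular with respect to the paper's internal structure (the proof of Theorem \ref{maint} does not use Proposition \ref{ssp1}), so your strategy for (1)$\Rightarrow$(2) is legitimate in outline: a generic object is indecomposable, endofinite, hence pure-injective by Krause's theorem, and if pure semisimplicity forced every indecomposable pure-injective object to be compact, generic triviality would follow and Theorem \ref{maint} would finish. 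But the step ``pure-semisimple implies every (indecomposable pure-injective) object is a coproduct of indecomposable compact objects'' is itself one of the main equivalences established in \cite{bel00} in the immediate vicinity of Theorem 12.20; by waving at ``\cite{bel00} or standard arguments'' you are assuming a substantial part of the very theorem you set out to prove.

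The direction (2)$\Rightarrow$(1) is where the genuine gap lies, and you acknowledge it yourself. The reduction to $A=kQ$ with $Q$ Dynkin via derived invariance of pure semisimplicity is fine, but ``the arguments of \cite{bel00} \dots show that every pure-exact triangle splits'' is not a proof, and Hom-finiteness plus local endomorphism rings of compact objects does not by itself control arbitrary (non-compact) objects of $D(\Mod kQ)$. The missing concrete ingredient is: since $kQ$ is hereditary, every object of $D(\Mod kQ)$ is isomorphic to the coproduct of its shifted cohomologies, and since $kQ$ is representation-finite, every module (of arbitrary size) is a direct sum of finite-dimensional indecomposables (Ringel--Tachikawa); hence every object of $D(\Mod kQ)$ is a coproduct of compact indecomposables, from which the splitting of every pure-exact triangle follows. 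You gesture at Auslander's result and at Happel's description, but you never isolate the hereditary splitting step, which is precisely what reduces the derived-category statement to the module-theoretic one. As it stands the proposal is a plausible road map rather than a proof, and it does not match the paper, which simply cites the result without argument.
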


\begin{cor}
 Let $A$ be a finite dimensional algebra. Then $  D(\Mod A)$ is pure semisimple if and only if $  D(\Mod A)$ is generically trivial.
\end{cor}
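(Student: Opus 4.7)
The plan is to deduce this corollary immediately by chaining two equivalences already at our disposal. Theorem~\ref{maint} asserts that $D(\Mod A)$ is generically trivial if and only if $A$ is derived equivalent to a hereditary algebra of Dynkin type, while Proposition~\ref{ssp1} (quoted from Beligiannis) asserts that $D(\Mod A)$ is pure semisimple if and only if $A$ is derived equivalent to a hereditary algebra of Dynkin type. Both biconditionals single out exactly the same class of algebras, so combining them produces the desired equivalence.

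Concretely, I would simply write: suppose $D(\Mod A)$ is generically trivial; then by Theorem~\ref{maint} $A$ is derived equivalent to a hereditary algebra of Dynkin type, and then by Proposition~\ref{ssp1} $D(\Mod A)$ is pure semisimple. The reverse implication reads the same chain in the opposite direction. A finite dimensional $k$-algebra is in particular an artin algebra, so the hypothesis of Proposition~\ref{ssp1} is met and no extra verification is required.

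There is essentially no technical obstacle here; the content of the corollary is the observation that two a priori independent characterizations pick out the same class of algebras. One could in principle give a more direct argument, for instance exploiting that endofinite objects are always pure-injective (as recalled after Proposition~3.5) so that pure semisimplicity forces endofinite objects to be compact, thereby ruling out generic objects; and conversely that generic triviality together with Theorem~\ref{maint} reduces the problem to hereditary Dynkin algebras, where pure semisimplicity is classical. However, such a route would duplicate work already done in Theorem~\ref{maint} and Proposition~\ref{ssp1}, so the most economical presentation is to cite both results and compose the two equivalences.
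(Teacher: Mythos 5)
Your proposal is correct and matches the paper's proof, which likewise obtains the corollary by composing Theorem~\ref{maint} with Proposition~\ref{ssp1}. The extra remarks about a more direct argument are not needed, but the core reasoning is exactly what the paper does.
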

\begin{proof}
It  follows easily from Theorem \ref{maint}and Proposition \ref{ssp1}.
\end{proof}
By \cite{xz05}, a $k$-linear triangulated category $\mac C$ is \emph{locally finite} if $\supp \Hom(X,-)$ contains only finitely many indecomposable objects for every indecomposable $X\in\mac C$, where $\supp \Hom(X,-)$ denotes
the subcategory generated by indecomposable objects $Y$ in $\mac C$ with $\Hom(X,Y)\neq 0$.
From the characterizations of locally finite triangulated categories, we have the following result.
\begin{cor}
   Let A be a finite dimensional k-algebra. Then $D(\Mod A)$ is generically trivial if and only if $K^b(\proj A)$ is locally finite.
\end{cor}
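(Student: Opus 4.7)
The plan is to use Theorem \ref{maint} as a bridge, since it already identifies generic triviality of $D(\Mod A)$ with $A$ being derived equivalent to a hereditary algebra of Dynkin type. It therefore suffices to show that local finiteness of $K^b(\proj A)$ is equivalent to the same condition.

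For the direction assuming $A$ is derived equivalent to $kQ$ with $Q$ a Dynkin quiver, I would use hereditariness to identify $K^b(\proj A)\simeq D^b(\mod kQ)$. The indecomposable objects of $D^b(\mod kQ)$ are precisely the shifts $M[n]$ of the indecomposable $kQ$-modules $M$; since $kQ$ is representation-finite of Dynkin type, there are only finitely many such $M$. Combined with the vanishing
\[
\Hom_{D^b(\mod kQ)}(M[i],N[j])=\Ext^{j-i}_{kQ}(M,N)=0\quad\text{for }j-i\notin\{0,1\},
\]
this immediately shows that $\supp \Hom(X,-)$ contains only finitely many indecomposables for every indecomposable $X\in K^b(\proj A)$, which establishes local finiteness.

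For the converse direction, I would invoke the classification of \cite{xz05}: every connected component of the Auslander--Reiten quiver of a Hom-finite locally finite triangulated category has the form $\mabb{Z}\Delta/G$ for some Dynkin diagram $\Delta$ and an admissible group of automorphisms $G$. Since $K^b(\proj A)$ admits $A$ as a classical tilting object, standard arguments on the behaviour of tilting objects in such categories should force the component containing $A$ to generate the whole category and the group $G$ to be trivial. Consequently $K^b(\proj A)\simeq D^b(\mod k\Delta)$, so $A$ is derived equivalent to the Dynkin hereditary algebra $k\Delta$, and Theorem \ref{maint} closes the loop.

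The main obstacle is the last step of the converse: rigorously eliminating a nontrivial $G$ using the presence of the tilting object $A$, since one must argue both that the tilting object lies in a single fundamental domain and that this component exhausts $K^b(\proj A)$. A safer alternative would combine \cite{xz05} with the preceding pure-semisimplicity corollary: local finiteness forces tight control of $\dim_k\Hom(X,Y)$ between indecomposables and of AR-translates, from which one can attempt to deduce the absence of generic objects in $D(\Mod A)$ directly, bypassing the detour through the derived-Dynkin characterization.
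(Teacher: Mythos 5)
Your overall strategy coincides with the paper's: both directions are routed through Theorem \ref{maint}, and the converse is attacked via the classification of locally finite triangulated categories from \cite{xz05}. Your treatment of the forward direction (Dynkin hereditary $\Rightarrow$ locally finite, via representation-finiteness and the vanishing of $\Ext^{j-i}$ for $j-i\notin\{0,1\}$) is fine and in fact more explicit than the paper, which leaves that implication implicit.

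The problem is the converse, where you yourself flag the elimination of the automorphism group $G$ as "the main obstacle" and only sketch a hoped-for argument with tilting objects, plus a vaguer fallback via pure-semisimplicity. As written this is a genuine gap: you have not actually shown $G$ is trivial. The paper's resolution is much simpler and does not mention tilting objects at all: if $G$ is a nontrivial admissible automorphism group of $\mabb{Z}\Delta$, then $\mabb{Z}\Delta/G$ has only finitely many vertices, i.e.\ the category would have only finitely many indecomposable objects up to isomorphism. But $K^b(\proj A)$ always contains infinitely many pairwise non-isomorphic indecomposables, namely the shifts $P[n]$, $n\in\mabb{Z}$, of a fixed indecomposable projective $P$ (no two of which are isomorphic since their homologies sit in different degrees). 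Hence $G$ is trivial, the Auslander--Reiten quiver is $\mabb{Z}\Delta$, and $K^b(\proj A)\cong K^b(\proj k\Delta)$, so $A$ is derived equivalent to $k\Delta$ and Theorem \ref{maint} finishes the proof. You should replace your tilting-object speculation with this counting argument; the rest of your proposal then goes through.
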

\begin{proof}
By Theorem \ref{maint}, we only need to show the 'if' part.
If $K^b(\proj A)$ is locally finite, then the Auslander-Reiten quiver $ \Gamma$ of $K^b(\proj A)$ is isomorphic to $\mabb Z\Delta/G$, where $\Delta$ is a diagram of Dynkin type and $G$ is an automorphism group of $\mabb Z\Delta$ \cite[Theorem 2.3.5]{xz05}. Since $K^b(\proj A)$ has infinitely many indecomposable objects, thus $G$ is trivial. It means that there exists an algebra $B=k\Delta$ such that $K^b(\proj A)\cong K^b(\proj B)$. We have that $A$ and $B$ are derived equivalent. Thus $D(\Mod A)\cong D(\Mod B)$ is generically trivial.
\end{proof}
\begin{exm}\begin{enumerate}
            \item Let $A=kQ/(\rho)$ be the path algebra of the gentle bounded quiver $(Q,\rho)$, which $Q$ is the quiver
$\xymatrix{1\ar@/^/[r]^{\a}&2\ar@/^/[l]^{\b}}$ with relation $\b\a=0$.
It is derived discrete algebra and $gl.\dim A=2$.  The repetitive algebra $\hat{A}$ is the path algebra of locally bounded quiver $(\hat{Q},\hat{\rho})$, where $\hat{Q}$ is
\[\xymatrix{&1[-1]\ar@/^/[d]^{\a[-1]}&&1[0]\ar@/^/[d]^{\a[0]}&&1[1]\ar@/^/[d]^{\a[1]}&&1[2]\ar@/^/[d]^{\a[2]}\\
\ldots\ar[r]&2[-1]\ar@/^/[u]^{\b[-1]}\ar[rr]^{\gamma[-1]}&&2[0]\ar@/^/[u]^{\b[0]}\ar[rr]^{\gamma[0]}&&2[1]\ar@/^/[u]^{\b[1]}\ar[rr]^{\gamma[1]}&&2[2]\ar@/^/[u]^{\b[2]}\ar[r]^{\gamma[2]}&\ldots}\]
with relations $\b[i]\a[i]=0,\gamma[i]\gamma[i-1]=0$ and $\gamma[i]\a[i]\b[i]=\a[i+1]\b[i+1]\gamma[i]$ for all $i\in\mabb Z$.
There is an infinite string of the form
\[\cdots\b^{-1}[i+1]\a^{-1}[i+1]\gamma[i]\b^{-1}[i]\a^{-1}[i]\gamma[i-1]\cdots.\]
The corresponding representation  $M$ of $\hat{Q}$ is
\[\xymatrix{&k\ar@/^/[d]^{\smatrix{1\\0}}&&k\ar@/^/[d]^{\smatrix{1\\0}}&&k\ar@/^/[d]^{\smatrix{1\\0}}&&k\ar@/^/[d]^{\smatrix{1\\0}}\\
\ldots\ar[r]&k^2\ar@/^/[u]^{\smatrix{0&1}}\ar[rr]_{\smatrix{0&0\\1&0}}&&k^2\ar@/^/[u]^{\smatrix{0&1}}\ar[rr]_{\smatrix{0&0\\1&0}}&&k^2\ar@/^/[u]^{\smatrix{0&1}}\ar[rr]_{\smatrix{0&0\\1&0}}&&k^2\ar@/^/[u]^{\smatrix{0&1}}\ar[r]_{\smatrix{0&0\\1&0}}&\ldots.}\]
where $M(\a[i])=\smatrix{1\\0},M(\b[i])=\smatrix{0&1}$ and $M(\gamma[i])=\smatrix{0&0\\1&0}$.

Moreover, $M$ is an endofinite object in $\uMod\hat{A}$ and is indecomposable. Thus it is a generic object in $\uMod \hat{A}$.  By the triangle equivalence $  D(\Mod A)\cong \uMod \hat{A}$, there exists a generic
object in $  D(\Mod A)$.
\item Let $A=kQ/(\rho)$ be the path algebra of the gentle bounded quiver
\begin{center}
\begin{tabular}{ccc}
$\xy <1cm,0cm>:
(-1,0)*+{\circ}="1",
(0,1)*+{\circ}="2",
(0,-1)*+{\circ}="3",
(1,0)*+{\circ}="4",
(-0.6,0.6)*+{\a},
(0.6,0.6)*+{\b},
(0.6,-0.6)*+{\delta},
(-0.6,-0.6)*+{\gamma},

\ar@{->}"2";"1"<0pt>
\ar@{->}"3";"1"<0pt>
\ar@{->}"4";"2"<0pt>
\ar@{->}"4";"3"<0pt>
\ar@{..}(0.3,0.7);(-0.3,0.7)<0pt>
\ar@{..}(0.3,-0.7);(-0.3,-0.7)<0pt>
%\ar@{->}"5,0";"6,-1"<0pt>
%\ar@{->}"5,0";"6,1"<0pt>

\endxy$
\end{tabular}
\end{center}
 with $\a\b=0,\delta\gamma=0$. $A$ is tiltable to $A'=k\tilde{A}_3$. Thus we have a triangule equivalence $\Phi:  D(\Mod A')\xrightarrow{\sim}   D(\Mod A)$.
For any generic $A'$-module $M\in\Mod A'$, it is a generic object in $  D(\Mod A')$ by Lemma \ref{gen}. Therefore $\Phi(M)$ is a generic object in $  D(\Mod A)$.
 \end{enumerate}
\end{exm}

\subsection*{Acknowledgements}
This paper is a part of the author's Ph.D thesis, written under supervision of Professor Henning Krause. The author would like to thank him for  suggesting the problem and for many stimulating conversations.
The author is also indebted to Claus M. Ringel for many helpful discussions. Finally, the author would like to express his thanks to Xiaowu Chen and Yong Jiang for their comments on previous versions of this paper.
The author is supported by the China Scholarship Council (CSC).

\bibliographystyle{plain}
%\addcontentsline{toc}{chapter}{Bibliography}
\bibliography{bib/thesis}
\end{document}